\newtheorem{theorem}{\bf{Theorem}}
\newtheorem{condition}{\bf{Assumption}}
\newtheorem{definition}{\bf{Definition}}
\newtheorem{lemma}{\bf{Lemma}}
\newtheorem{problem}{\bf{Problem}}
\newtheorem{remark}{\bf{Remark}}
\begin{document}
 
\title{\Large Attack-Resilient Distributed Convex Optimization of Linear Multi-Agent Systems Against Malicious Cyber-Attacks over Random Digraphs}
 
\author
{Zhi~Feng and~Guoqiang~Hu 

\thanks{
This work was supported by Singapore Ministry of Education Academic Research Fund Tier 1 RG180/17(2017-T1-002-158) and in part by the Wallenberg-NTU Presidential Postdoctoral Fellow Start-Up Grant. Z. Feng and G. Hu are with the School of Electrical and Electronic Engineering, Nanyang Technological University, Singapore 639798 (E-mail: zhifeng@ntu.edu.sg; gqhu@ntu.edu.sg). 
}  
} 
 
 
\maketitle 

\begin{abstract}
This paper addresses a resilient exponential distributed convex optimization problem for a heterogeneous linear multi-agent system under Denial-of-Service (DoS) attacks over random digraphs. The random digraphs are caused by unreliable networks and the DoS attacks, allowed to occur aperiodically, refer to an interruption of the communication channels carried out by the intelligent adversaries. In contrast to many existing distributed convex optimization works over a prefect communication network, the global optimal solution might not be sought under the adverse influences that result in performance degradations or even failures of optimization algorithms. The aforementioned setting poses certain technical challenges to optimization algorithm design and exponential convergence analysis. In this work, several resilient algorithms are presented such that a team of agents minimizes a sum of local non-quadratic cost functions in a safe and reliable manner with global exponential convergence. Inspired by the preliminary works in \cite{Hu15IFAC,Hu15Cyber,Hu15IJNRC,Hu19TCST}, an explicit analysis of frequency and duration of attacks is investigated to guarantee exponential optimal solutions. 
Numerical simulation results are presented to demonstrate the effectiveness of the proposed design. 
\end{abstract}
 
\vspace*{-2pt} 
\begin{IEEEkeywords}
Distributed convex optimization, Linear multi-agent system, Heterogeneous network, Random digraph, DoS attack. 
\end{IEEEkeywords}

\IEEEpeerreviewmaketitle

\vspace*{-10pt}
\section{Introduction}
\vspace*{-2pt}
Distributed convex optimization over a multi-agent system has attracted growing attention over the last decade due to its potential applications involving source localization in sensor networks \cite{Rabbat07Conf}, resource allocation in multi-cell networks \cite{Shen12TSP}, energy and thermal comfort optimization in smart building \cite{Gupta15TSCI},  economic dispatch in smart grid \cite{Bai19TCST}, etc. The gradient-based distributed algorithms have been widely provided in the existing works, which build on either continuous-time or discrete-time agent dynamics to seek an optimal solution \cite{Schenato16TAC,Gharesifard14TAC,Hu16TAC,Hu19TCNS,Zhao17TAC,Kia15AT,Liu15TAC,LiuTCNS18,Ding20TAC}. Each agent needs to calculate a global optimizer based on the information exchange over a communication network. The network security plays a fundamental yet important role in information transmission. Unfortunately, due to malicious attacks, (e.g., DoS attacks, deception attacks (false data injections, replay attacks), disclosure attacks (eavesdropping), and Byzantine attacks (faulty agents)), the secure network environment is hardly guaranteed in practice. The malicious attacks interrupt, incorrect, or tamper transmitting information so that efficiency of distributed optimization algorithm is degraded significantly or even failed. In light of wide applications of distributed optimization algorithms in cyber-physical systems (safety-critical), and inspired by studies of security issues in consensus works (e.g., \cite{Mo14SP,Hu15IFAC,Hu15Cyber,Hu15IJNRC,Hu19TCST,Feng17AT,Chen19SMC}), it is highly desirable to determine how resilient distributed optimization algorithms are against cyber-attacks. Motivated by the observation,
we aim to address resilient problems of distributed optimization with certain resilience against DoS attacks over unreliable networks. 

\subsection{Related Literature Review} 
\vspace*{-1pt}
\textit{Continuous-Time Distributed Convex Optimization}: paralleled with the discrete-time optimization schemes, the continuous-time distributed optimization algorithms have attracted much attention due to the well-developed continuous-time analysis techniques in control and the real-world cyber-physical system. In particular, the Zero-gradient-sum \cite{Hu19TCNS} and Newton-Raphson \cite{Schenato16TAC} algorithms are designed based on the positive and bounded Hessian of the local cost functions, while the Lagrangian-based algorithm based on the local gradients is adopted in \cite{Gharesifard14TAC}. The penalty-based optimization strategies are developed in  \cite{Hu19TCNS} and \cite{Hu16TAC}, while the works  investigate the multi-agent system with single-integrator dynamics only. The authors in \cite{Liu15TAC} further consider distributed optimization of second-order multi-agent systems. Adaptive schemes are designed in \cite{Zhao17TAC} to achieve distributed optimization of linear agents via nonsmooth signum functions and gradients that satisfy special structures. All above designs require continuous communication. To remove this requirement, the time-based and event-triggered based strategies are presented in \cite{Kia15AT,LiuTCNS18,Ding20TAC} over undirected and connected graphs. Besides, these algorithms need known initial states of each agent, which is difficult to verify in practice.

\vspace{2.5pt}
\textit{Resilient Distributed Convex Optimization}: the closest related works on this topic have been recently published in \cite{Sundaram15Conf,Su16ACC,Sundaram19TAC,Zhao20TAC} in which the optimal solutions are obtained for first-order discrete-time multi-agent systems under Byzantine attacks (faulty nodes). In particular, the authors in \cite{Sundaram15Conf} present the resilient optimization algorithm by removing  $ F $ (maximum amount of tolerable faults) nodes' largest and smallest states at each iteration, such that the optimal solution converges to a convex hull of the set of all non-faulty nodes' local minimizers. This algorithm is adopted in \cite{Su16ACC} to deal with constrained optimization problems. The local filtering consensus-based algorithms against $ F $ faulty nodes are developed in \cite{Sundaram19TAC}, where the optimal solution is achieved under the $r$-robust condition. The modified algorithm named RDO-T is developed in \cite{Zhao20TAC}, where the number of faulty agents is allowed to be any large. These aforementioned works assume that the faulty nodes and the removal of $ F $ states are known a prior to the designer. Moreover,
these algorithms rely on the network connectivity of the complete graph or the undirected and connected graph.    
 
\vspace*{-8pt}
\subsection{Main Contributions} 
\vspace*{-1pt}
This paper is concerned with a resilient study of distributed optimization algorithms against cyber-attacks over random digraphs. The malicious DoS attacks, allowed to occur aperiodically, aim to interrupt information transmission. In addition to DoS attacks, the random digraphs are induced by unreliable networked constraints. The main contributions of this paper are summarized as follows. (1) inspired by our designs in \cite{Hu15IFAC,Hu15Cyber,Hu15IJNRC,Hu19TCST}, this is the first to present time-based and event-based distributed optimization algorithms to achieve resilient distributed optimization of heterogeneous linear multi-agent networks against DoS attacks over random digraphs. The proposed algorithms that rely on a consensus-based gradient strategy with a switching system method to constrain DoS attacks, are capable of exactly seeking the optimal solutions under attacks over random digraphs. The global exponential convergence of the proposed algorithm can be ensured, provided that the frequency and duration of attacks satisfy certain bounded conditions;  
(2) these proposed resilient distributed optimization algorithms  avoid continuous-time communication in many existing works (see \cite{Schenato16TAC,Gharesifard14TAC,Hu16TAC,Hu19TCNS,Liu15TAC,Zhao17TAC}). The proposed dynamic event-based distributed optimization scheme is proven to be free of Zeno behavior, and  avoid fixed and periodic transmissions used in the time-based scheme; and (3) in contrast to related optimization works that require known positive Hessians of the local cost functions \cite{Hu19TCNS} and \cite{Schenato16TAC}, known gradients satisfying special linear structures \cite{Zhao17TAC}, known initial agent states \cite{Kia15AT,LiuTCNS18,Ding20TAC},
the proposed algorithms in this work remove those limitations to facilitate practical applications. Moreover, compared to  optimization works in \cite{Sundaram15Conf,Su16ACC,Sundaram19TAC,Zhao20TAC} that consider Byzantine faults on nodes and require the removal of their states to be known a prior, these DoS attacks on communication networks are time-sequence based and allowed to occur aperiodically. Another contribution of this paper is that unlike works in  \cite{Schenato16TAC,Gharesifard14TAC,Hu16TAC,Hu19TCNS,Zhao17TAC,Kia15AT,Liu15TAC,LiuTCNS18,Ding20TAC} and  \cite{Sundaram15Conf,Su16ACC,Sundaram19TAC,Zhao20TAC} over the  undirected graph or weight-balanced digraph, the studied graphs are directed and time-varying, and under cyber-attacks, the graphs can be even disconnected or totally paralyzed.

 
\vspace*{-1pt} 
\section{Preliminaries and Problem Formulation} 
\vspace*{-2pt} 
\subsection{Notation \label{Notation}}
\vspace*{-2pt} 
Denote $\mathbb{R}$, $\mathbb{R}^{n}$, and $\mathbb{R}^{n\times m}$ as the sets of the real numbers, real $n$-dimensional vectors and real $n\times m$ matrices, respectively. 
Let $ \mathbb{N}^{+} $ be the set of positive natural numbers.  
Let $\textbf{0}$ ($\textbf{1}$) be the vector with all zeros (ones) with proper dimensions. 
Let col$(x_{1},...,x_{n})$ and diag$\{a_{1},...,a_{n}\}$ be a column vector with the entries $x_{i}$ and a diagonal matrix with the entries $a_{i}$, $i=1,2,\cdots ,n$, respectively. 
$\otimes $ and $\left\Vert \cdot \right\Vert $ represent the Kronecker product and Euclidean norm, respectively. For a real matrix $M=M^{T}$, let $ M > 0 $ be positive definite. Let $\lambda _{\min }(M)$, $\lambda _{\max }(M)$ be its minimum and maximum eigenvalues, respectively. Besides, $\sigma _{\max}(M)$ represents the maximum singular value of a matrix $ M $. 
For a differentiable function $ f: \mathbb{R}^{n} \rightarrow  \mathbb{R} $, $ \triangledown f $ is the gradient of $ f $, and $ f $ is strongly convex over a convex set $ \mathbb{R}^{n} $,  
if  $ (x-y)^{T}( \triangledown f(x)- \triangledown f(y)) > \iota ||x-y||^{2}$ for $ \forall x,y \in \mathbb{R}^{n}, x \neq y$ and a scalar $ \iota>0$. $ f $ is locally Lipschitz at $ x \in \mathbb{R}^{n} $ if there exists a neighbourhood $ \mathcal{W} $ and a scalar $ l $ so that $ ||f(x)-f(y)|| \leq l ||x-y|| $ for $\forall x, y \in \mathcal{W} $; $ f $ is locally Lipschitz on $ \mathbb{R}^{n}$ if it is locally Lipschitz at $ x $ for $\forall x \in \mathbb{R}^{n} $.

\vspace*{-4pt}
\subsection{Graph Theory\label{Graph theory}} 
\vspace*{-2pt}
\textit{Static Digraph:} let $\mathcal{G}$ $=$ $\left\{ \mathcal{V},\mathcal{E}\right\} $ be a graph and $\mathcal{V}$ $\in $ $\left\{ 1,...,N\right\} $ be the set of vertices. The set of edges is denoted as $\mathcal{E}$ $\subseteq$ $ \mathcal{V\times V}$. 
$\mathcal{N}_{i}$ $=$ $%
\left\{ j\in \mathcal{V\mid }(j,i)\in \mathcal{E}\right\} $ is the neighborhood set of vertex $i$. For a directed graph $\mathcal{G}$, $(i,j)\in \mathcal{E}$ means that the
information of node $ i $ is accessible to node $ j $, but not conversely. A matrix $A$ $=$ $\left[ a_{ij}\right] $ 
is the adjacency matrix of $\mathcal{G}$, where $a_{ij}>0$ if $(j,i)\in \mathcal{E}$, else $a_{ij}=0$. A matrix $\mathcal{L}=[l_{ij}] $ is called the Laplacian matrix of $\mathcal{G}$, where $ l_{ii}=\sum^{N}_{j=1}a_{ij} $ and $ l_{ij}= -a_{ij}$, $ i \neq j$. 

\vspace*{1pt}
\textit{Markovian Random Digraph:} 
let $\mathcal{G}(t)=\left\{ \mathcal{V},\mathcal{E}_{r(t)}\right\} $ be a time-varying digraph with
 $\mathcal{E}_{r(t)}$ being a set of edges, and $r(t): [0, \infty) \\ \rightarrow \mathcal{S}=\{1,2,...,s\}$ is a piecewise constant function with $ \mathcal{S} $ being an index set of possible digraphs. The piecewise-constant function $r(t)$ is a Markovian signal. $\mathcal{A}(t)$ $=[ a_{ij}^{r(t)} ] $ is the adjacency matrix, where $%
a_{ij}^{r(t)}>0$ if $(j,i)\in \mathcal{E}_{r(t)}$, else $a_{ij}^{r(t)}=0.$ The neighboring set is denoted by $
\mathcal{N}_{i}(t)=\left\{ j \in \mathcal{V}, (j,i) \in \mathcal{E}_{r(t)}  \right\} $. Denote  $\mathcal{L}(t)=[l^{r(t)}_{ij}]$, where  $ l^{r(t)}_{ii}=\sum^{N}_{j=1}a^{r(t)}_{ij} $ and $ l^{r(t)}_{ij}= -a^{r(t)}_{ij}$, $ i \neq j$.

\vspace*{-2pt}    
\subsection{Heterogeneous Linear Multi-Agent Model}
\vspace*{-1pt}
Consider a multi-agent system consisting of $ N $ agents governed by the following heterogeneous linear dynamics:
\vspace*{-2pt}    
\begin{equation}  
\dot{x}_{i}(t)=A_{i}x_{i}(t)+B_{i}u_{i}(t), \ y_{i}(t)=C_{i}x_{i}(t), \ i \in \mathcal{V}, 
\label{Dynamics} 
\end{equation}
where $ x_{i} \in \mathbb{R}^{n_{i}} $ denotes the state of agent $i$, $ u_{i} \in \mathbb{R}^{p_{i}} $ denotes the control input of agent $i$, $ y_{i} \in \mathbb{R}^{q} $ is its output, and $ A_{i} \in \mathbb{R}^{n_{i} \times n_{i}}$, $B_{i} \in \mathbb{R}^{n_{i} \times p_{i}}$,  $C_{i} \in \mathbb{R}^{q \times n_{i}}$ are constant system matrices. Suppose that the matrix pair $ (A_{i},B_{i}) $ is controllable and 
\vspace*{-1pt}
\begin{equation}
\hspace{2em}	
\text{rank} \left[ \begin{array}{cc}
C_{i}B_{i} & 0_{q\times p_{i}} \\
-A_{i}B_{i} & B_{i} \\
\end{array} \right] = n_{i}+q, \ i \in \mathcal{V}. 
\label{0assumption} 
\end{equation}	
  
\textit{General Distributed Optimization Problem}: design a distributed scheme $ u_{i}(t) $ for (\ref{Dynamics}) using local interaction and information over a communication network so that the output of all agents can reach the optimal state $\theta^{*}$ that minimize: $ \mathrm{F}(\theta)=\sum_{i=1}^{N}f_{i}(\theta), \ \theta \in \mathbb{R}^{q}$,  
where $ f_{i}(\theta): \mathbb{R}^{q}\rightarrow \mathbb{R} $ is the private cost function known to agent $ i $ only, and $ \theta $ is the global decision variable to be optimized. From  \cite{Gharesifard14TAC}, it is equivalent to solve:  
\vspace*{-5pt}
\begin{equation} \label{OCP} 
\begin{split} 
& \min \limits_{y \in \mathbb{R}^{Nq} }  \ \tilde{f}(y)=\sum_{i=1}^{N}f_{i}(y_{i}), \  y_{i} \in \mathbb{R}^{q},  \\
\text{subject to} \ & (\ref{Dynamics}) \ \text{and} \   
y_{i}=y_{j}, \ \forall i,j \in \mathcal{V}=\{1,2,\cdots,N\}, 
\end{split}
\end{equation}
where $ y_{i} \in \mathbb{R}^{q} $ is a local estimate on the optimal solution $\theta^{*}$, and $ y=\text{col}(y_{1},\cdots,y_{N}) $ is its stack vector of all estimates.  

\vspace{3pt}
To solve the problem, two standard assumptions are introduced.

\vspace{2pt}
\begin{condition} \label{Assumption2}
There exists $ y^{*}=1_{q}\otimes \theta^{*} $ that minimizes the team cost function, i.e., $ \tilde{f}(y^{*})=\min_{\theta \in \mathbb{R}^{q}} F(\theta)$. 	
\end{condition}

\vspace{2pt}
\begin{condition} \label{Assumption3}
Each function $ f_{i}: \mathbb{R}^{q}\rightarrow \mathbb{R} $ is differentiable, strongly convex, and its gradient is locally Lipschitz on $ \mathbb{R}^{q} $. 
\end{condition}

\begin{remark}
Assumption \ref{Assumption2} guarantees the optimal solution set is nonempty. By Assumption \ref{Assumption3}, $ ||\triangledown f_{i}(x_{i})- \triangledown f_{i}(y_{i})|| \leq l_{i} ||\triangledown  x_{i}- y_{i}||, \ \forall x_{i}, y_{i} \in \mathbb{R}^{q} $, where $ \triangledown f_{i}(x_{i}) $ and $  \triangledown f_{i}(y_{i}) $ are the gradients, and $ l_{i}>0  $ is the Lipschitz constant. 
\end{remark}

\vspace*{-5pt}
\subsection{Unreliable Random Communication Network}
\vspace*{-1pt}
In large-scale cyber-physical systems, the wireless communication may not be reliable due to certain  
physical uncertainties, e.g., failures, quantization errors, and packet losses in a digital communication \cite{Zhang12TAC}. We consider an unreliable network consisting of $N$ agents whose integrated wireless communication links are time-varying and failure-prone with certain probabilities. As considered in \cite{Zhang12TAC}, a random Markov chain model can be adopted to capture this situation. This random process describes dynamic changes of topologies. Let $r(t)$ be a right-continuous homogeneous Markovian process on the probability space taking values in a finite state space $\mathcal{S}=\{1,2,...,s\}$ with an infinitesimal generator $\Upsilon =(\gamma _{pq})$, 
given by $ \mathrm{P}_{pq}(t)= \mathrm{\Pr ob}\{r(t+h)=q|r(t)=p\}  
= \gamma _{pq}h+o(h)$, if $p\neq q$, else, 
$1+\gamma _{pp}h+o(h) $,    
where $\gamma _{pq}\geq 0$ is the transition rate from the state $p$ to the state $q$, while $\gamma _{pp}= -\sum^{s}_{q=1,p\neq q}  \gamma _{pq},$ and  $o(h)$ satisfies: $\lim_{h\rightarrow 0}o(t)/h=0$.

\subsection{Malicious DoS Attack Model}
\vspace*{-1pt}
As studied in \cite{Hu19TCST}, the DoS attack refers to the interruption of the communication channels. Without loss of generality,
suppose that there exists an attack sequence $\{a_{m}\}_{m\in\mathbb{N}}$
when a DoS attack is lunched at $a_{m}$, and let the duration of this attack be $\tau_{m}\geq 0$. 
Then, the $m$-th DoS attack strategy can be generated with  $\mathcal{A}_{m}$ $=$ $ {a_{m}} \cup [a_{m},a_{m}+\tau_{m})$ with $a_{m+1}>a_{m}+\tau_{m}$ for all $m\in\mathbb{N}.$ Thus, for given $t\geq t_{0} $, the sets of time instants where communication is denied (unsuccessful) are described by 
\vspace*{-2pt}
\begin{equation}
\Xi _{a}(t_{0},t)=\cup \ \mathcal{A}_{m} \cap \ \lbrack t_{0},t], \ m\in\mathbb{N},
\label{DoSAttacks}
\end{equation}
which implies that on the interval $ [t_{0} , t] $, the sets of time instants where communication is allowed are: $\Xi _{s}(t_{0},t)=[t_{0} ,t] \setminus \Xi _{a}(t_{0}, t)$. In words, $ |\Xi _{a}(t_{0},t)| $ and $ |\Xi _{s}(t_{0},t)| $ represent the total lengths of the attacker being active and sleeping over $ [t_{0},t] $, respectively.

\vspace*{2pt}
\begin{definition} [Attack Frequency] \label{Attack Frequency}
For any $T_{2}>T_{1}\geq t_{0}$, let $N_{a}(T_{1},T_{2})$ denote the total number of DoS attacks over $[T_{1},T_{2})$. Then,   $F_{a}(T_{1},T_{2})=\frac{N_{a}(T_{1},T_{2})}{T_{2}-T_{1}}$ denotes the attack frequency over $[T_{1},T_{2})$ for $\forall T_{2}>T_{1}\geq t_{0}$, where there exists scalars $ N_{0}, T_{f} >0 $ such that $ N_{a}(T_{1},T_{2}) \leq N_{0}+ (T_{2}-T_{1})/T_{f}. $
\end{definition}

\vspace*{1pt}
\begin{definition} [Attack Duration] \label{Attack Duration}
For any $T_{2}>T_{1}\geq t_{0},$ let $|\Xi_{a}(T_{1},T_{2})|$ be the total time interval under attacks over $[T_{1},T_{2}).$ The attack duration over $[T_{1},T_{2})$ is defined as: there exist scalars $T_{0}\geq 0$, $T_{a}>1$ so that $|\Xi_{a}(T_{1},T_{2})| \leq T_{0}+(T_{2}-T_{1})/T_{a}.$
\end{definition}

\begin{remark}
As mentioned in the preliminary works  \cite{Hu15IFAC,Hu15Cyber,Hu15IJNRC,Hu19TCST} for multi-agent systems under malicious cyber-attacks, Definitions \ref{Attack Frequency} and \ref{Attack Duration} are firstly introduced in \cite{Hu15IFAC} to specify DoS attack signals in terms of the frequency and time-ratio constraints. 
\end{remark}

\vspace*{-3pt} 
\subsection{Main Objective}
\vspace*{-2pt} 
The main objective of the paper is to solve a  resilient distributed optimization issue of heterogeneous linear multi-agent systems.

\vspace*{2pt}
\begin{problem} \label{Problem2}
(\textbf{Resilient Distributed Convex Optimization against DoS Attacks over Random Digraphs}) \\
\vspace*{1pt}
Develop a resilient distributed algorithm $ u_{i} $ so that the output of each agent cooperatively seeks $ \theta^{*} $ under DoS attacks over random digraphs. The problem in (\ref{OCP}) is reformulated as  
\vspace*{-4pt}          
\begin{equation} \label{P2} 
\begin{split} 
& \min \limits_{y \in \mathbb{R}^{Nq} }  \ \tilde{f}(y)=\sum_{i=1}^{N}f_{i}(y_{i}), \  y_{i} \in \mathbb{R}^{q},  \\
\text{subject to} \ &  \dot{x}_{i}(t)=A_{i}x_{i}(t)+B_{i}u_{i}(t),   y_{i}(t)=C_{i}x_{i}(t), \\
\text{and} \ &  (\mathcal{L}(t) \otimes I_{q}) y=\textbf{0}, 
\ t \in \Xi _{s}(t_{0},t) \cup \Xi _{a}(t_{0},t).  
\end{split}
\end{equation} 
\end{problem} 
 
\begin{remark} 
Solving resilient exponential distributed optimization in Problem \ref{Problem2} is much more challenging in fourfold: (1) \textit{Agent dynamics:} we investigate general linear multi-agent systems with nonidentical dynamics and dimensions; (2) \textit{Communication:} the unreliable communication network leads to time-varying directed graphs and moreover, the interruptions of communication channels caused by DoS attacks make existing distributed optimization algorithms degraded or totally failed; (3) \textit{Assumptions:} the local gradient is nonlinear without needing to satisfy special structures in many existing works and only the local Lipschitz property is required; and 4) \textit{Design requirements:} propose the initialization-free resilient distributed optimization algorithms with the discrete-time communication nature to provide global exponential convergence and resilience features against DoS attacks over random digraphs. The existing designs in  \cite{Schenato16TAC,Gharesifard14TAC,Hu16TAC,Hu19TCNS,Zhao17TAC,Kia15AT,Liu15TAC,LiuTCNS18,Ding20TAC} cannot be directly applied.
\end{remark}

\section{Resilient Exponential Distributed Optimization Against Dos ATTACKS over Random Networks}
Due to unreliable networks described in the  subsection II-D, the underlying topologies are time-varying and random. As stated in \cite{Nedic11TAC}, define that a sequence of Laplacian matrices $ \{\mathcal{L}(t)\} $ admits a common stationary distribution $ \pi>0 $ if $ \mathcal{L}(t)\pi =\textbf{0} $. 
Let $ \mathcal{L}_{s}(t) $ be a mirror of $\mathcal{L}_{un}(t)$, i.e.,  $ \mathcal{L}_{s}(t)= (\mathcal{L}_{un} + \mathcal{L}_{un}^{T})/2 $, where $ \mathcal{L}_{un}=\sum_{p=1}^{N} L_{p}(t) $ is  Laplacian matrix of a union of digraphs. Define the minimum cut of $ \{\mathcal{L}(t)\} $ as $ l_{c}(t)=\min_{\mathcal{S} \subset \mathcal{V}, \mathcal{S} \neq \emptyset } \sum_{i \in \mathcal{S}, j\in \bar{\mathcal{S}}} \mathcal{L}_{s}(t) $, where $ \bar{\mathcal{S}} $ is the complement of $ \mathcal{S} $. Then, we say that this sequence of $ \{\mathcal{L}(t)\} $ has a minimum cut $ c $, if $ l_{c}(t) \geq c >0 $.

\vspace*{2pt}
\begin{condition} \label{Assumption4}
Assume that the sequence of Laplacian matrices $ \{\mathcal{L}(t)\} $ has a stationary distribution $ \pi $ with a minimum cut.  
\end{condition} 	
 	 
\vspace*{2pt} 	  
\begin{lemma} \label{Lemma3}
By Assumption \ref{Assumption4}, let $ \mathcal{L}(t) $ be the Laplacian matrix with a stationary distribution $ \pi>0 $. Then, there exists a weighted matrix $ Q(t)=\mathcal{L}(t)\Pi+\Pi\mathcal{L}^{T}(t) $, $ \Pi=\text{diag}\{\pi \} $, so that for $ \pi_{min} = \min_{p \in \mathcal{V}} \{ \pi_{p} \}$ and a vector $ \xi \in \mathbb{R}^{N} $ satisfying $ \pi^{T} \xi =0 $, we have 
\vspace{-4pt}
\begin{equation}
\xi^{T}Q(t)\xi = \sum_{i=1}^{N} \sum_{j\in \mathcal{N}(t)}Q_{ij}(t)(\xi_{i}-\xi_{j})^{2} \geq \frac{ \pi_{min} c }{N^{2}} ||\xi||^{2}.  
\end{equation}
\end{lemma}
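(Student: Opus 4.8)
\emph{Proof plan.} The strategy is to recognize $Q(t)$ as a symmetric weighted Laplacian and then reduce the desired spectral estimate to a combinatorial min-cut inequality. First I would verify that $Q(t)=\mathcal{L}(t)\Pi+\Pi\mathcal{L}^{T}(t)$ is symmetric and that the stationary-distribution property $\mathcal{L}(t)\pi=\mathbf{0}$, together with the row-sum identity $\mathcal{L}(t)\mathbf{1}=\mathbf{0}$, yields $Q(t)\mathbf{1}=\mathbf{0}$; equivalently, in each row the diagonal entry equals the negated sum of the off-diagonal entries. Since the off-diagonal entries are $-(a_{ij}^{r(t)}\pi_{j}+a_{ji}^{r(t)}\pi_{i})\le 0$, the matrix $Q(t)$ is a genuine symmetric Laplacian with nonnegative edge weights $w_{ij}=a_{ij}^{r(t)}\pi_{j}+a_{ji}^{r(t)}\pi_{i}$. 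The zero-row-sum property is exactly what makes the quadratic-form identity
\[
\xi^{T}Q(t)\xi=\sum_{i=1}^{N}\sum_{j\in\mathcal{N}(t)}Q_{ij}(t)(\xi_{i}-\xi_{j})^{2}
\]
hold, which establishes the first equality; this is the easy half.

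For the lower bound I would exploit $\pi^{T}\xi=0$ with $\pi>0$. A zero weighted mean guarantees that $\xi$ takes both a nonpositive and a nonnegative value, so after sorting $\xi_{(1)}\le\cdots\le\xi_{(N)}$ one has $\xi_{(1)}\le 0\le\xi_{(N)}$ and hence $|\xi_{(k)}|\le \xi_{(N)}-\xi_{(1)}=:\rho$ for every $k$. Writing $\delta_{k}=\xi_{(k+1)}-\xi_{(k)}\ge 0$, two applications of Cauchy--Schwarz give $\|\xi\|^{2}\le N\rho^{2}$ and $\rho^{2}=(\sum_{k}\delta_{k})^{2}\le (N-1)\sum_{k}\delta_{k}^{2}$, whence $\sum_{k}\delta_{k}^{2}\ge \|\xi\|^{2}/N^{2}$. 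This is where both factors of $N$ in the denominator originate.

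The heart of the argument is a coarea/layer-cake decomposition of the edge sum. For an edge joining sorted positions $p<q$ one has $\xi_{(q)}-\xi_{(p)}=\sum_{k=p}^{q-1}\delta_{k}$, so $(\xi_{(q)}-\xi_{(p)})^{2}\ge\sum_{k=p}^{q-1}\delta_{k}^{2}$ because all $\delta_{k}\ge 0$. Regrouping the edge sum by threshold index $k$ shows it dominates $\sum_{k}\delta_{k}^{2}$ weighted by the cut of $Q(t)$ across the partition $S_{k}=\{(1),\dots,(k)\}$. Every such cut is at least the minimum cut, and since each weight obeys $w_{ij}\ge \pi_{\min}(a_{ij}^{r(t)}+a_{ji}^{r(t)})$, the $Q$-cut is at least $\pi_{\min}$ times the structural cut controlled by $l_{c}(t)\ge c$. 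Combining this with $\sum_{k}\delta_{k}^{2}\ge\|\xi\|^{2}/N^{2}$ delivers $\xi^{T}Q(t)\xi\ge \frac{\pi_{\min}c}{N^{2}}\|\xi\|^{2}$.

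I expect the main obstacle to lie in the book-keeping of the coarea step, specifically in matching the cut of the $\pi$-weighted matrix $Q(t)$ against the minimum cut $c$ defined through the symmetrized Laplacian $\mathcal{L}_{s}(t)$ and tracking the factor $\pi_{\min}$ produced by the symmetrization. The spectral-to-combinatorial reduction itself is routine once $Q(t)$ is confirmed to be a connected symmetric Laplacian; the genuinely delicate point is ensuring that the stationary-distribution hypothesis actually delivers $Q(t)\mathbf{1}=\mathbf{0}$ (so that the identity is valid), while the constraint $\pi^{T}\xi=0$ keeps $\xi$ away from the kernel direction $\mathbf{1}$ so that a strictly positive bound is attainable.
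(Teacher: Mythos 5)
Your proposal is correct and belongs to the same family as the paper's proof --- both sort the entries of $\xi$, reduce the quadratic form to the weight crossing cuts of the form $\{(1),\dots,(k)\}$ in the sorted order, and lower-bound each such cut by $\pi_{\min}\,l_{c}(t)\ge \pi_{\min}c$ --- but your gap accounting is genuinely different and, as it happens, sharper. The paper isolates only the \emph{single maximal} consecutive gap $\varepsilon=|\xi_{i_{*}}-\xi_{i_{*}-1}|$, keeps just the one cut at position $i_{*}$, and closes with $|\xi_{i}|\le N\varepsilon$; taken literally that chain gives $\|\xi\|^{2}\le N\cdot(N\varepsilon)^{2}=N^{3}\varepsilon^{2}$, so the single-gap route only yields the constant $\pi_{\min}c/N^{3}$, and the stated $N^{2}$ actually needs the refinement you propose. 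Your coarea step retains \emph{all} $N-1$ threshold cuts, uses $(\xi_{(q)}-\xi_{(p)})^{2}\ge\sum_{k=p}^{q-1}\delta_{k}^{2}$, and combines $\sum_{k}\delta_{k}^{2}\ge\|\xi\|^{2}/(N(N-1))$ with the uniform cut bound, which legitimately produces $\pi_{\min}c/N^{2}$. One caution on your ``easy half'': with the paper's literal definition $Q=\mathcal{L}\Pi+\Pi\mathcal{L}^{T}$ and the stationarity convention $\mathcal{L}\pi=\mathbf{0}$, you get $Q\mathbf{1}=\mathcal{L}\pi+\Pi\mathcal{L}^{T}\mathbf{1}=\Pi\mathcal{L}^{T}\mathbf{1}$, and $\mathcal{L}^{T}\mathbf{1}$ is the vector of \emph{column} sums of $\mathcal{L}$, which the row-sum identity does not annihilate for an unbalanced digraph; the zero-row-sum property you need holds for the symmetrization $\Pi\mathcal{L}+\mathcal{L}^{T}\Pi$ under the left-eigenvector convention $\pi^{T}\mathcal{L}=\mathbf{0}$. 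You correctly flagged this as the delicate point --- it is a transposition inconsistency inherited from the lemma's statement rather than a flaw in your plan --- but your write-up must fix the convention before invoking $Q\mathbf{1}=\mathbf{0}$ and the quadratic-form identity.
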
 	

\vspace*{-2pt} 
\begin{proof} 
without loss of generality, we suppose that $ || \xi || \neq 0 $, where it satisfies $\xi_{1} \leq \xi_{2} \leq \cdots \leq \xi_{N} $. Since $ || \xi || \neq 0 $ and $ \pi^{T} \xi =0 $, we have $ \xi_{1} <0 $ and $ \xi_{N}>0 $. Define $ i_{*} \in \text{argmax}_{i \geq 2} | \xi_{i}-\xi_{i-1} |$ and $ \varepsilon = | \xi_{i_{*}}-\xi_{i_{*}-1} |$. Then, we obtain that 
\vspace{-3pt}
\begin{align}
\xi^{T}Q(t)\xi &= \sum_{i=1}^{N} \sum_{i < j  }Q_{ij}(t)(\xi_{i}-\xi_{j})^{2} 
\geq  \sum_{i \leq i_{*}}  \sum_{i_{*} < j }  Q_{ij}(t) \varepsilon^{2} \notag \\
& \geq  \pi_{min} l_{c}(t) \varepsilon^{2} 
\geq \frac{  \pi_{min} c }{N^{2}} ||\xi||^{2}, 
\label{Quadric}  
\end{align}
where the fact $ |\xi_{i}| \leq N \varepsilon $ is used to get the last inequality.
\end{proof}
	  
\vspace{-5pt}
\subsection{Distributed Time-Based Strategy Against DoS Attacks}
\vspace{-1pt}  
Based on the DoS attack model in the subsection II-E, 
without loss of generality, suppose that there exists an infinite sequence of uniformly bounded non-overlapping time intervals $[t_{k}, t_{k+1}), k=0,1,2\cdots$. Each agent $ i $ thus updates its controller $ u_{i}(t) $ over time intervals $[t_{k}, t_{k+1}) $. Then, in the presence of the DoS attacks,
we denote the attack-induced new time sequence as $ 0=t_{0} =  a_{0} <t_{1}<a_{1}<a_{1}+\tau_{1}\leq t_{2}<\cdots<t_{k}<a_{k}<a_{k}+\tau_{k}\leq t_{k+1}<\cdots $.    

\vspace{1pt}
Next, we propose the following distributed algorithm to analyze the effect of DoS attacks over random digraphs.
\vspace{-3pt}
\begin{subequations}\label{Controller2}	
	\begin{align}
	u_{i}&=-K_{i}x_{i}-(U_{i}-K_{i}X_{i})\varrho_{i}+W_{i}\vartheta_{i},  \label{TC1} \\
	\dot{\varrho}_{i}&=\vartheta_{i}= -\triangledown f_{i}(y_{i}) -\beta e_{\varrho zi} - \alpha \beta e_{y i} , \label{TC2} \\
	\dot{z}_{i} & = \alpha \beta e_{y i},  t \in   [t_{k},t_{k+1}), \ i \in \mathcal{V},  \label{TC3} 
	\end{align}
\end{subequations}
where $ \alpha,\beta \in \mathbb{R} $ are positive constant gains, $\vartheta_{i}$ is the intermediate variable, and $ e_{\varrho zi} $, $ e_{yi} $ are  consensus errors under DoS attacks 
\vspace{-2pt}
\begin{equation}
e_{\varrho z i} = \left\{
\begin{array}{c} 
\sum_{j\in \mathcal{N}_{i}(t) }a_{ij}(t) (\varrho_{i}-\varrho_{j} + z_{i}-z_{j}),  \text{ if} \ t \in [t_{k},a_{k}), \\
\hspace{-5em}
\textbf{0},  \text{ if} \ t \in   [a_{k},t_{k+1}) , \ k=0,1,2\cdots,  
\end{array} 
\right.   \label{TC4} 
\end{equation}
\vspace{-12pt}
\begin{equation}
e_{y i} = \left\{
\begin{array}{c} \sum_{j\in \mathcal{N}_{i}(t)} a_{ij}(t) (y_{i}-y_{j}),  \text{ if} \ t \in [t_{k},a_{k}), \\
\hspace{-1em}
\textbf{0},  \text{ if} \ t \in   [a_{k},t_{k+1}), \  k=0,1,2\cdots, 
\end{array}%
\right. \label{TC5}
\end{equation}
where  $ \varrho_{i}, z_{i} \in \mathbb{R}^{q}$ denote the auxiliary states. In (\ref{Controller2}),  $ K_{i} \in \mathbb{R}^{p_{i}\times n_{i}} $ 
and $ (U_{i}, W_{i},  X_{i})$ is the solution to the following equations: 
\vspace{-3pt}
\begin{equation}
B_{i}U_{i}=A_{i}X_{i}, \ B_{i}W_{i}=X_{i}, \ C_{i}X_{i} =I_{q}, \ i \in \mathcal{V}. 
\label{RegulationEquation}
\end{equation}

\begin{remark}
The proposed algorithm in (\ref{Controller2})-(\ref{RegulationEquation}) has properties: 1) it is distributed as each agent communicates with its neighbors only; 2) each agent transmits exchange information over random digraphs, while in the presence of DoS attacks, the information is interrupted; and 3) $ \sum_{j\in \mathcal{N}_{i}(t) }a_{ij}(t) ( z_{i}-z_{j})  $ in $ e_{\varrho z i} $ is  relative internal state information, which is used to avoid zero-sum initial requirements.
Besides, the solution of (\ref{RegulationEquation}) is ensured by (\ref{0assumption}).      
\end{remark}

\vspace{2pt} 
Define $ \tilde{\mathcal{L}}(t)=\mathcal{L}(t)\otimes I_{q} $. Substituting (\ref{Controller2})-(\ref{TC5}) into (\ref{Dynamics}) gives the following closed-loop system under attacks and random digraphs.  
\vspace{-4pt}
\begin{equation} \label{Controller2Compact}
\dot{\chi} = 
\left\{ 
\begin{array}{l}
\left[ \begin{array}{ccc}
(A-BK)x+BW \vartheta-B(U-KX)\varrho  \\
-\triangledown \tilde{f}(y) - \alpha \beta \tilde{\mathcal{L}}(t) y - \beta \tilde{\mathcal{L}}(t) (\varrho + z) \\
\alpha \beta  \tilde{\mathcal{L}}(t) y
\end{array} \right],   \\   \text{ if} \ t \in  [t_{k},a_{k}),
k=0,1,2\cdots   \\
\left[ \begin{array}{ccc}
(A-BK)x+BW \vartheta-B(U-KX)\varrho  \\
-\triangledown \tilde{f}(y)  \\
\textbf{0}
\end{array} \right],   \\  \text{ if} \ t \in  [a_{k},t_{k+1}) ,
k=0,1,2\cdots
\end{array}
\right. 	
\end{equation} 
where $\chi=\text{col} (x,\varrho,z) $ and $ x, \varrho, z, \vartheta, \triangledown \tilde{f}(y) $ are stacked vectors of $ x_{i} $, $\varrho_{i}$, $z_{i}$,  $\vartheta_{i} $, $ \triangledown f_{i}(y_{i}) $, and $ A, B, C, K, U, W, X $ are stacked block diagonal matrices of $ A_{i} $, $ B_{i} $, $ C_{i} $, $ K_{i} $, $ U_{i} $, $ W_{i} $, $ X_{i} $, respectively. 

\vspace{1pt} 
Next, we present the exponential resilient distributed optimization result against DoS attacks and random digraphs.
 
\vspace{2pt} 
\begin{theorem} \label{Theorem2}
Given Assumptions \ref{Assumption2}-\ref{Assumption4}, Problem 1 is solvable for any $ x_{i}(0)$, $\varrho_{i}(0) $, and $z_{i}(0) $ under the proposed resilient distributed optimization algorithm in (\ref{Controller2})-(\ref{TC5}), provided that $ K_{i} $ is chosen so that $A_{i}-B_{i}K_{i}$ is Hurwitz, $ U_{i}, W_{i},  X_{i}$ are solutions to (\ref{RegulationEquation}), and for scalars $\lambda_{a}, \lambda_{b} , u >0 $ to be determined later, the following two arrack-related conditions are satisfied: 

\vspace*{1pt}
(1). There exists constants $\eta^{\ast }\in (0,\lambda_{a})$ and $ \mu \geq 1 $ so that $T_{f}$ in the \textit{attack frequency} Definition \ref{Attack Frequency} satisfies the condition:
\vspace*{-3pt}
\begin{equation}
T_{f} > T^{*}_{f}=  \ln(\mu)/\eta^{\ast},
\label{Condition1}
\end{equation}

\vspace{-3pt}
(2). There exist constants $ \lambda_{a}$, $\lambda_{b}>0 $ such that $T_{a}$ in the \textit{attack duration} Definition \ref{Attack Duration} satisfies the condition: 
\vspace*{-3pt}
\begin{equation}
\hspace{2em}
T_{a}>T^{*} _{a}=(\lambda_{a}+\lambda_{b})/(\lambda_{a}-\eta^{\ast }). 
\label{Condition2}
\end{equation}%

\vspace{-3pt}
Moreover, the estimated states converge exponentially, i.e.,  
\vspace*{-2pt}
\begin{equation}
\| \tilde{\chi} (t) \|^{2} \leq \varsigma e^{-\eta (t-t_{0})} ||\tilde{\chi}(t_{0}) ||^{2},  \ \forall t_{0} \geq 0,
\label{ExponentialConvergence}
\end{equation}%
where $ \tilde{\chi}= \text{col} (\tilde{x}-X\tilde{\varrho},\tilde{\varrho}  + \tilde{z} , \tilde{y}) $ with $ \tilde{x}=x-\bar{x} $, $ \tilde{\varrho}=\varrho-\bar{\varrho} $, and $ \tilde{z}=z-\bar{z} $ representing the state transformations, $ \varsigma $ is a positive scalar to be determined later, and 
$ \eta=\lambda_{a}-(\lambda_{a}+\lambda_{b})/T_{a} -\eta^{*}>0$. 
\end{theorem}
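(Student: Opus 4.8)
The plan is to read the closed loop (\ref{Controller2Compact}) as a switched system with a contracting ``attack-sleeping'' mode on each $[t_k,a_k)$ and an open-loop ``attack-active'' mode on each $[a_k,t_{k+1})$, and to fuse a single Lyapunov function with a dwell-time estimate governed by the frequency bound (Definition \ref{Attack Frequency}) and the duration bound (Definition \ref{Attack Duration}). First I would pin down a constant reference $(\bar x,\bar\varrho,\bar z)$: by Assumption \ref{Assumption2} the minimizer $y^{*}=\mathbf{1}\otimes\theta^{*}$ exists with $\sum_i\triangledown f_i(\theta^{*})=0$, and since $\mathcal L(t)\mathbf{1}=\mathbf{0}$ we automatically get $\dot z=\mathbf{0}$ at $y=y^{*}$, while the remaining steady-state relation $\beta\tilde{\mathcal L}(t)(\bar\varrho+\bar z)=-\triangledown\tilde f(y^{*})$ is solvable through the range/null-space structure of the Laplacian under Assumption \ref{Assumption4} --- this is precisely the role of the internal state $z$ noted after (\ref{RegulationEquation}), removing any zero-sum initialization. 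Solvability of (\ref{RegulationEquation}), guaranteed by (\ref{0assumption}), then fixes $\bar x=X\bar\varrho$. With $\tilde x=x-\bar x$, $\tilde\varrho=\varrho-\bar\varrho$, $\tilde z=z-\bar z$ and $\tilde y=C\tilde x$, the coordinates $\tilde\chi=\mathrm{col}(\tilde x-X\tilde\varrho,\ \tilde\varrho+\tilde z,\ \tilde y)$ pay off twice: using $B_iU_i=A_iX_i$ and $B_iW_i=X_i$ the block $\tilde\chi_1=\tilde x-X\tilde\varrho$ satisfies the autonomous $\dot{\tilde\chi}_1=(A-BK)\tilde\chi_1$, which decays since $A-BK$ is Hurwitz, and the combined variable $w=\tilde\varrho+\tilde z$ obeys, in the sleeping mode, the clean gradient-plus-consensus flow $\dot w=-(\triangledown\tilde f(y)-\triangledown\tilde f(y^{*}))-\beta\tilde{\mathcal L}(t)w$.

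Next I would build $V=\tilde\chi_1^{T}(P\otimes I_q)\tilde\chi_1+V_o(w,\tilde y)$, where $P>0$ solves the Lyapunov equation for $A-BK$ and $V_o$ is a $\Pi$-weighted ($\Pi=\mathrm{diag}\{\pi\}$) quadratic form in $w$ and $\tilde y$ plus a cross term, tuned so that $V$ is positive definite and comparable to $\|\tilde\chi\|^{2}$. In the sleeping mode I would split $w$ into its $\pi$-weighted agreement part and a disagreement part $w_\perp$ carrying zero $\pi$-weighted sum: the $\Pi$-weighted consensus term, which reduces to $-\beta\,w^{T}(Q(t)\otimes I_q)w$, is bounded above by $-\beta\frac{\pi_{\min}c}{N^{2}}\|w_\perp\|^{2}$ through Lemma \ref{Lemma3}, while strong convexity and the local-Lipschitz bound of Assumption \ref{Assumption3} control the gradient term and steer the agreement value to $\theta^{*}$, and the plant block contributes $-c_1\|\tilde\chi_1\|^{2}$. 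Choosing $\alpha,\beta$ large enough to dominate the cross terms then yields $\dot V\le-\lambda_a V$ on $[t_k,a_k)$; because the estimate from Lemma \ref{Lemma3} is uniform over the realized topology (through the minimum cut $c$), it holds path-wise and no expectation over the Markov chain is needed. In the active mode $e_{\varrho z i}=e_{y i}=\mathbf{0}$, so $w$ and $\tilde y$ no longer contract; there the plant block still decays while the optimization states grow at most exponentially, giving the weaker bound $\dot V\le\lambda_b V$ on $[a_k,t_{k+1})$.

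Concatenating the two mode bounds over $[t_0,t]$ and letting $\mu\ge1$ absorb any jump of $V$ at the switches gives $V(t)\le\mu^{N_a(t_0,t)}e^{-\lambda_a|\Xi_s(t_0,t)|+\lambda_b|\Xi_a(t_0,t)|}V(t_0)$. Substituting $|\Xi_s|=(t-t_0)-|\Xi_a|$ turns the exponent into $-\lambda_a(t-t_0)+(\lambda_a+\lambda_b)|\Xi_a|+\ln(\mu)N_a$; inserting $N_a\le N_0+(t-t_0)/T_f$ and $|\Xi_a|\le T_0+(t-t_0)/T_a$ and invoking (\ref{Condition1}) (so that $\ln(\mu)/T_f<\eta^{*}$) and (\ref{Condition2}) (so that $\eta=\lambda_a-(\lambda_a+\lambda_b)/T_a-\eta^{*}>0$) collapses the residual growth into the constant $\mu^{N_0}e^{(\lambda_a+\lambda_b)T_0}$ and leaves the net rate $-\eta(t-t_0)$. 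Absorbing these constants together with the $V\sim\|\tilde\chi\|^{2}$ comparison into $\varsigma$ yields (\ref{ExponentialConvergence}).

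The crux is the second step: assembling one Lyapunov function whose sleeping-mode derivative is strictly negative over the time-varying random digraph. Lemma \ref{Lemma3} only controls the disagreement component $w_\perp$, so I must cleanly decompose the optimization error along the $\pi$-agreement direction and its complement, show that the gradient/strong-convexity terms drive the agreement value to the optimizer, and then balance the cross terms among $\tilde y$, $w$ and the cascade block $\tilde\chi_1$ by the choice of $\alpha,\beta$ --- all while keeping the quadratic form positive definite and obtaining a $\lambda_a$ that is uniform in the realized switching. Once $\lambda_a$, $\lambda_b$ and $\mu$ are in hand, the dwell-time algebra of the last step is routine.
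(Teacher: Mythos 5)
Your proposal follows essentially the same route as the paper's proof: characterize the equilibrium via the regulator equations (\ref{RegulationEquation}) and the optimality condition $\sum_i\triangledown f_i(\theta^{*})=\mathbf{0}$, pass to the error coordinates $\tilde{\chi}=\mathrm{col}(\tilde{x}-X\tilde{\varrho},\tilde{\varrho}+\tilde{z},\tilde{y})$, establish $\dot V_a\le-\lambda_a V_a$ on the attack-free subintervals via Lemma \ref{Lemma3} and Assumption \ref{Assumption3} and $\dot V_b\le\lambda_b V_b$ on the attacked ones, and then run the piecewise-Lyapunov/switching estimate with $\mu$ absorbing the jumps and the frequency/duration bounds delivering the net rate $\eta$. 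The only cosmetic differences are that the paper uses a block-diagonal $\Pi$-weighted quadratic form (no cross term) and applies Lemma \ref{Lemma3} directly rather than through an explicit $\pi$-agreement/disagreement decomposition, so your version is, if anything, slightly more careful on that point.
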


\vspace{3pt} 
\begin{proof} 
the idea is to show the convergence of $(x,\varrho,z)$ to the equilibrium point, which will include four steps below: \\
\vspace{-1pt}
i) consider the case that the communication is not subject to DoS attacks, i.e., $ t \in [t_{k},a_{k}) $. We first show that the output $ y_{i} $ at the equilibrium point is an optimal solution of (\ref{P2}).   

\vspace{1pt}
In the absence of DoS attacks, (\ref{Controller2Compact}) can be rewritten as
\vspace{-4pt}
\begin{subequations}\label{Controller1Compact}	
\begin{align}
\dot{x}&=(A-BK)x+BW \vartheta-B(U-KX)\varrho,  \label{CC1} \\
\dot{\varrho}&=\vartheta=-\triangledown \tilde{f}(y) - \alpha \beta \tilde{\mathcal{L}}(t) y - \beta \tilde{\mathcal{L}}(t) (\varrho  + z) , \label{CC2} \\
\dot{z} & = \alpha \beta  \tilde{\mathcal{L}}(t) y. \label{CC3}
\end{align}
\end{subequations}
 
\vspace{-3pt}
By (\ref{RegulationEquation}), we obtain the equilibrium point $ (\bar{x},\bar{\varrho},\bar{z}) $ from 
\vspace{-3pt}
\begin{subequations}\label{Controller1CompactEquilibrium}
\begin{align}
\dot{\bar{x}}&=\textbf{0} \ \Longrightarrow \ (A-BK) (\bar{x}-X\bar{\varrho}) + X \dot{\bar{\varrho}}=\textbf{0},  \label{CCE1} \\
\dot{\bar{\varrho}}&=\textbf{0} \ \Longrightarrow \    -\triangledown \tilde{f}(y) - \alpha \beta \tilde{\mathcal{L}}(t) \bar{y} - \beta \tilde{\mathcal{L}}(t) (\bar{\varrho}  + \bar{z}) =\textbf{0} , \label{CCE2} \\
\dot{\bar{z}}&=\textbf{0} \ \Longrightarrow \   \alpha \beta  \tilde{\mathcal{L}}(t) \bar{y} = \alpha \beta  \tilde{\mathcal{L}}(t) C \bar{x} =\textbf{0}.  \label{CCE3}
\end{align}
\end{subequations}

In the sequel, we show that the equilibrium point is the solution. Deducing from (\ref{Controller1CompactEquilibrium}), the equilibrium point satisfies
\vspace{-3pt}
\begin{equation}
(A-BK) (\bar{x}-X\bar{\varrho}) = \textbf{0}, \  \sum_{i=1}^{N} \triangledown \tilde{f}_{i}(\bar{y}_{i}) = \textbf{0}, \ \bar{y}_{i}=C_{i}\bar{x}_{i}=y^{*}, \label{Controller1CompactEquilibriumCondition}
\end{equation}
where $ y^{*} \in \mathbb{R}^{q} $, and since $ \tilde{f} $ is strongly convex, $ \sum_{i=1}^{N} \triangledown \tilde{f}_{i}(y^{*}) = \textbf{0} $ implies that $ y^{*}  $ is the optimal solution. Since $ K $ is selected such that $ A-BK $ is Hurwitz, it follows from (\ref{RegulationEquation}) and  (\ref{Controller1CompactEquilibriumCondition}) that 
\vspace{-3pt} 
\begin{equation}
\bar{x}=X\bar{\varrho} \ \ \text{and} \ \ \bar{y} = \bar{\varrho}=\textbf{1} \otimes y^{*}. 
\end{equation}

\vspace{-2pt} 
Thus, $ (\bar{x},\bar{\varrho},\bar{z})=(X\bar{\varrho},\bar{\varrho},\bar{z}) $, and $ y_{i} $ at the equilibrium point is the optimal solution of (\ref{P2}) in the absence of attacks.  

\vspace{3pt}
Let $ F= \triangledown \tilde{f}(\tilde{y}+\bar{y})-\triangledown \tilde{f}(\bar{y}) $ and $ \tilde{y}=C \tilde{x} $. Since $ \tilde{x}=x-\bar{x} $, $ \tilde{\varrho}=\varrho-\bar{\varrho} $, $ \tilde{z}=z-\bar{z} $, we further have the following system  
\vspace{-2pt} 
\begin{subequations}\label{Controller1ClosedLoop}
\begin{align}
\dot{\tilde{x}}&= (A-BK) (\tilde{x}-X\tilde{\varrho}) + X \dot{\tilde{\varrho}},  \label{CCL1} \\
\dot{\tilde{\varrho}}&=  -F - \alpha \beta \tilde{\mathcal{L}}(t) \tilde{y} - \beta \tilde{\mathcal{L}}(t) (\tilde{\varrho}  + \tilde{z}), \label{CCL2} \\
\dot{\tilde{z}}&=   \alpha \beta  \tilde{\mathcal{L}}(t) \tilde{y}.  \label{CCL3} 
\end{align}
\end{subequations}      

Choose the following nonnegative Lyapunov function candidate $ V_{a}=V_{1a}+V_{2a}+V_{3a} $, where 
\vspace{-1pt} 
\begin{subequations}\label{Lyapunovfunction}
\begin{align}
V_{1a}&= (\tilde{x}-X\tilde{\varrho})^{T} P_{a}(\tilde{x}-X\tilde{\varrho}) ,  \label{L1} \\
V_{2a}&=  \frac{1}{2} (\tilde{\varrho}  + \tilde{z})^{T} (\Pi \otimes I_{q}) (\tilde{\varrho}+\tilde{z}), \label{L2} \\
V_{3a}&= \frac{1}{2} \tilde{y}^{T} (\Pi \otimes I_{q})\tilde{y}, \label{L3} 
\end{align}
\end{subequations} 
where $ P_{a}>0 $ and $ \Pi =\text{diag}\{\pi\} $ is defined in Lemma \ref{Lemma3}. 

\vspace{2pt}
Then, the time derivatives of (21a) along (\ref{Controller1ClosedLoop}) is described by
\vspace{-2pt}  
\begin{align}
\dot{V}_{1a}&=(\tilde{x}-X\tilde{\varrho})^{T} (P_{a}\tilde{A} +\tilde{A}^{T}P_{a} ) (\tilde{x}-X\tilde{\varrho}) \label{V1} \\
&=-(\tilde{x}-X\tilde{\varrho})^{T} O (\tilde{x}-X\tilde{\varrho}) \leq  - \epsilon_{0} (\tilde{x}-X\tilde{\varrho})^{T}  (\tilde{x}-X\tilde{\varrho}), \notag
\end{align}
where $ \tilde{A}=A-BK $ is Hurwitz so that there is a positive define matrix $ O$ satisfying $ P_{a}\tilde{A} +\tilde{A}^{T}P_{a}=- O$ and $ \epsilon_{0}=\lambda_{min}(O)>0 $.
Then, it follows from (\ref{Lyapunovfunction}) and (\ref{CCL2})-(\ref{CCL3})  that 
\vspace{-2pt}  
\begin{align}
\dot{V}_{2a}& = (\tilde{\varrho}  + \tilde{z})^{T} (\Pi \otimes I_{q}) ( \dot{\tilde{\varrho}} + \alpha \beta  \tilde{\mathcal{L}}(t) \tilde{y} ),  \notag \\
& = (\tilde{\varrho}  + \tilde{z})^{T} (\Pi \otimes I_{q}) ( -F  - \beta \tilde{\mathcal{L}}(t) (\tilde{\varrho}  + \tilde{z}) ) \notag  \\
&= -(\tilde{\varrho}  + \tilde{z})^{T} (\Pi \otimes I_{q})F-\beta (\tilde{\varrho}  + \tilde{z})^{T} Q(t) (\tilde{\varrho}  + \tilde{z}).  \label{V2} 
\end{align} 

The time derivative of (\ref{L3}) is expressed as 
\vspace{-3pt}   
\begin{align}
\dot{V}_{3a}& = \tilde{y}^{T} (\Pi \otimes I_{q}) C \dot{\tilde{x}} = \tilde{y}^{T} (\Pi \otimes I_{q}) C (\tilde{A} (\tilde{x}-X\tilde{\varrho}) + X \dot{\tilde{\varrho}}) \notag \\
&= \tilde{y}^{T} (\Pi \otimes I_{q}) C\tilde{A} (\tilde{x}-X\tilde{\varrho}) - \tilde{y}^{T} (\Pi \otimes I_{q})  F  \notag  \\
& \ \ \ + \tilde{y}^{T} (\Pi \otimes I_{q})  C X (  - \alpha \beta \tilde{\mathcal{L}}(t) \tilde{y} - \beta \tilde{\mathcal{L}}(t) (\tilde{\varrho}  + \tilde{z})) \notag \\
& \leq \epsilon_{1} ||\Pi C ||^{2}  \tilde{y}^{T}\tilde{y}+ \frac{||\tilde{A}||^{2}}{4 \epsilon_{1}}  (\tilde{x}-X\tilde{\varrho})^{T}(\tilde{x}-X\tilde{\varrho})  \notag \\ 
& \ \ \ +  \tilde{y}^{T} (\Pi \otimes I_{q}) (-F  - \alpha \beta \tilde{\mathcal{L}}(t) \tilde{y} - \beta \tilde{\mathcal{L}}(t) (\tilde{\varrho}  + \tilde{z})), 
\label{V3}  
\end{align} 
where $ C_{i}X_{i}=I_{q} $ in (\ref{RegulationEquation}) and the Young inequality are used to derive the last inequality, and $ \epsilon_{1} $ is a positive constant. 

\vspace{1pt} 
Let $ \iota_{min} =\min_{i\in \mathcal{V}} \{\iota_{i}\} $, $ l_{max} =\max_{i\in \mathcal{V}} \{l_{i}\} $ and $ \epsilon_{2} >0$ is a scalar. Due to the fact that $ F= \triangledown \tilde{f}(\tilde{y}+\bar{y})-\triangledown \tilde{f}(\bar{y}) $ and according to Assumption \ref{Assumption3}, we can further obtain that 
\vspace{-1pt}   
\begin{subequations}\label{V4}
\begin{align}
& \tilde{y}^{T} (\Pi \otimes I_{q}) F  \geq \iota_{min} \pi_{min} || \tilde{y} ||^{2}, \ ||F|| \leq l_{max} ||\tilde{y}||, \label{V4a}  \\ 
& (\tilde{\varrho}  + \tilde{z})^{T} (\Pi \otimes I_{q})F \leq  \frac{||(\Pi \otimes I_{q})F ||^{2}}{4 \epsilon_{2}} + \epsilon_{2} ||\tilde{\varrho}  + \tilde{z}||^{2}. \label{V4b} 
\end{align} 
\end{subequations} 

Thus, combining (\ref{V1})-(\ref{V4}) gives rise to 
\vspace{-3pt}   
\begin{align}
\dot{V}_{a}&  \leq    [  \frac{||\tilde{A}||^{2}}{4 \epsilon_{1}} - \epsilon_{0} ]   (\tilde{x}-X\tilde{\varrho})^{T}(\tilde{x}-X\tilde{\varrho}) + \epsilon_{1}||(\Pi \otimes I_{q} ) C ||^{2}  \tilde{y}^{T}\tilde{y}  \notag \\
&  - \iota_{min} \pi_{min} \tilde{y}^{T}  \tilde{y} -  \alpha \beta  \tilde{y}^{T} Q(t) \tilde{y} - \beta  \tilde{y}^{T} (\Pi \otimes I_{q}) \tilde{\mathcal{L}}(t) (\tilde{\varrho}  + \tilde{z})) \notag \\
&  + \frac{l^{2}_{max}\pi^{2}_{max}}{4 \epsilon_{2}} ||\tilde{y}||^{2}+ \epsilon_{2} ||\tilde{\varrho}  + \tilde{z}||^{2} -\beta (\tilde{\varrho}  + \tilde{z})^{T} Q(t) (\tilde{\varrho}  + \tilde{z}) \notag \\
& \leq -  \left[ \begin{array}{ccc}
\tilde{x}-X\tilde{\varrho}  \\
\tilde{\varrho}  + \tilde{z}   \\ 
\tilde{y}    
\end{array} \right] ^{T} \varOmega_{a}  \left[ \begin{array}{ccc}
\tilde{x}-X\tilde{\varrho}  \\
\tilde{\varrho}  + \tilde{z}   \\ 
\tilde{y}    
\end{array} \right] \leq - \lambda_{a} V_{a}, 
\label{V5}  
\end{align}
where $ \lambda_{a} = \lambda_{min}(\varOmega_{a} )/\lambda_{max}(\varGamma_{a})$, $ \varGamma_{a}=\frac{1}{2} \text{diag} \{2P_{a}, \Pi\otimes I_{q}, \Pi \otimes \\    I_{q} \} $, $ \varOmega_{a} = \left[ \begin{array}{ccc}
\varOmega^{1}_{a} & \textbf{0}  &  \textbf{0}  \\
* &  \varOmega^{2}_{a}  &   \varOmega^{3}_{a}   \\ 
* &  *  &   \varOmega^{4}_{a}    
\end{array} \right] >0 $,  $ \varOmega^{1}_{a}= ( \epsilon_{0}-\frac{||\tilde{A}||^{2}}{4 \epsilon_{1}})\textbf{ I} $, $ \varOmega^{2}_{a}= (\beta \frac{ \pi_{min} c }{N^{2}} -\epsilon_{2} )\textbf{I} $, $ \varOmega^{3}_{a}= \frac{\beta}{2}(\Pi\otimes I_{q}) \tilde{\mathcal{L}}  $,  $ \varOmega^{4}_{a}= (\alpha \beta \frac{ \pi_{min} c }{N^{2}} +l_{min} \pi_{min} - \epsilon_{1}||(\Pi \otimes I_{q} ) C ||^{2} - \frac{l^{2}_{max}\pi^{2}_{max}}{4 \epsilon_{2}} ) \textbf{I}$,  where $ \pi_{max} =\max_{i\in \mathcal{V}} \{\pi_{i}\} $ and $ \textbf{I} $ is the identify matrix with proper dimensions.

Hence, denote $ \tilde{\chi}= \text{col} (\tilde{x}-X\tilde{\varrho},\tilde{\varrho}  + \tilde{z} , \tilde{y}) $ and there exists scalars $ \epsilon_{i}>0$, $i=0,1,2$ and matrices $ \varGamma_{a}=\text{diag}\{P_{a}, \Pi \otimes I_{q}, \Pi \otimes I_{q} \} $, $ P_{a}>0 $, $ \Pi=\text{diag}\{ \pi \} $, such that we have  
\vspace{-3pt}
\begin{equation}
V_{a}=\frac{1}{2} \tilde{\chi}^{T}\varGamma_{a} \tilde{\chi} \  \Rightarrow  \ \dot{V}_{a} \leq - \lambda_{a} V_{a}, \ t \in [t_{k},a_{k}). \label{VV1}
\end{equation}   

ii) consider the case that in the presence of DoS attacks, i.e., $ t \in [a_{k},t_{k+1}) $, we choose the Lyapunov function candidate as
\vspace{-3pt}
\begin{equation}
V_{b}=\frac{1}{2} \tilde{\chi}^{T}\varGamma_{b}\tilde{\chi},  \  \varGamma_{b} = \left[ \begin{array}{ccc}
2P_{b} & \textbf{0} & \textbf{0} \\
* & S \otimes I_{q} & \textbf{0} \\
* & *  & S \otimes I_{q}
\end{array} \right],  \label{VV2} 
\end{equation} 
where $ P_{b}>0 $, and $ S>0 $ is a diagonal  matrix.

Then, the time derivative of $ V_{b} $ along (\ref{Controller2Compact}) is given by  
\vspace{-2pt}
\begin{align}
\dot{V}_{b}&=(\tilde{x}-X\tilde{\varrho})^{T} ( P_{b}\tilde{A} +\tilde{A}^{T}P_{b} ) (\tilde{x}-X\tilde{\varrho}) -(\tilde{\varrho}  + \tilde{z})^{T} (S \otimes I_{q})F \notag \\
& \ \ \ + \tilde{y}^{T} (S \otimes I_{q}) C\tilde{A} (\tilde{x}-X\tilde{\varrho}) - \tilde{y}^{T} (S \otimes I_{q})  F \notag \\
& \leq [ \frac{||\tilde{A}||^{2}}{4 \tilde{\epsilon}_{1}} - \tilde{\epsilon}_{0}]   (\tilde{x}-X\tilde{\varrho})^{T}(\tilde{x}-X\tilde{\varrho}) + \tilde{\epsilon}_{1}||(S \otimes I_{q} ) C ||^{2}  \tilde{y}^{T}\tilde{y}  \notag \\
& \ \ \ - \iota_{min} \lambda_{min}(S) \tilde{y}^{T}  \tilde{y} + \frac{l^{2}_{max}}{4 \tilde{\epsilon}_{2}} \tilde{y}^{T}\tilde{y}+ \tilde{\epsilon}_{2} (\tilde{\varrho}  + \tilde{z})^{T}(\tilde{\varrho}  + \tilde{z})   \notag \\
& \leq \tilde{\chi}^{T} \varOmega_{b} \tilde{\chi}  \leq   \lambda_{b} V_{b} ,  
\label{VV3}  
\end{align}
where $ \lambda_{b}=\sigma_{max} ( \varOmega_{b} ) >0 $, and $ \varOmega_{b} $ can be described by  
\vspace{-3pt}
\begin{equation}
\varOmega_{b}= \left[ \begin{array}{ccc}
(\frac{||\tilde{A}||^{2}}{4 \tilde{\epsilon}_{1} }-\tilde{\epsilon}_{0})\textbf{ I}  & \textbf{0} & \textbf{0} \\
* &  \tilde{\epsilon}_{2} \textbf{I} & \textbf{0} \\
* & *  & \tilde{\epsilon}_{3} \textbf{I} 
\end{array} \right],  \label{VV4} 
\end{equation}
where $ \tilde{\epsilon}_{0}=\lambda_{min}(\tilde{O}) $ with  $P_{b}\tilde{A} +\tilde{A}^{T}P_{b} =-\tilde{O}$, $ \tilde{\epsilon}_{3}=\tilde{\epsilon}_{1}||(S \otimes I_{q} ) \\ C ||^{2} + \frac{l^{2}_{max}}{4 \tilde{\epsilon}_{2}} -\iota_{min} \lambda_{min}(S) $, and $\tilde{\epsilon}_{1}, \tilde{\epsilon}_{2}, \tilde{\epsilon}_{3}>0 $.

\vspace{3pt}
iii) we analyze the exponential convergence of the closed-loop system from a hybrid perspective in  \cite{Hu15IFAC,Hu15Cyber,Hu15IJNRC,Hu19TCST}. 

Denote $ \delta(t) \in \{a,b\} $ as a switching signal. Then, we can select a piecewise Lyapunov function candidate: $ V(t)=
V_{a}(\tilde{\chi})$, if  $ t \in  [t_{k},a_{k})$, and  $V(t)=
V_{b}(\tilde{\chi})$, if $ t \in   [a_{k},t_{k+1}), $
 where $ V_{a}(\tilde{\chi}) $ and $ V_{b}(\tilde{\chi}) $ are defined in (\ref{VV1}) and (\ref{VV2}), respectively.
 
We suppose that $ V_{a} $ is activated in $ [t_{k},a_{k}) $, while $ V_{b} $ is activated in $ [a_{k},t_{k+1}) $. Then, it follows from  (\ref{VV1}) and (\ref{VV3}) that we get  
\vspace{-3pt}
\begin{equation}
V(t)=\left\{ 
\begin{array}{c}
\hspace{-0.7em}  
e^{-\lambda_{a}(t-t_{k}) } V_{a}(t_{k}), \  \text{if} \ t \in [t_{k},a_{k}), \\ 
e^{\lambda_{b}(t-a_{k}) } V_{b}(a_{k}), \  \text{ if}  \  t \in [a_{k},t_{k+1}).
\end{array}%
\right. 
\label{VV6}
\end{equation}

Note that a closed-loop system is switched at $ t= t^{+}_{k} $ or $ t= a^{+}_{k} $. Let $ \mu=\max\{\lambda_{max}(\varGamma_{a})/\lambda_{min}(\varGamma_{b}),\lambda_{max}(\varGamma_{b})/\lambda_{min}(\varGamma_{a})\} \geq 1$, and next, we discuss the switching situation in two cases: 

Case a): if $ t \in  [t_{k},a_{k}) $, it follows from (\ref{VV6}) that 
\vspace{-2pt}
\begin{align}
V(t) & \leq e^{-\lambda_{a}(t-t_{k}) } V_{a}(t_{k}) \leq \mu e^{-\lambda_{a}(t-t_{k}) } V_{b}(t^{-}_{k}) \notag \\
& \leq \mu e^{-\lambda_{a}(t-t_{k}) } [ e^{\lambda_{b}(t_{k}-t_{k-1}) } V_{b}(t_{k-1}) ]  \notag \\ 
& \leq \mu e^{-\lambda_{a}(t-t_{k})} e^{\lambda_{b}(t_{k}-t_{k-1}) } [  \mu V_{a}(t^{-}_{k-1})  ] \notag \\ 
& = \mu^{2} e^{-\lambda_{a}(t-t_{k})} e^{\lambda_{b}(t_{k}-t_{k-1}) }   V_{a}(t^{-}_{k-1})  \notag \\ 
& \leq \mu^{2} e^{-\lambda_{a}(t-t_{k})} e^{\lambda_{b}(t_{k}-t_{k-1}) }  [ e^{-\lambda_{a}(t_{k-1}-t_{k-2})} V_{a}(t_{k-2})  ]   
\notag \\ 
& \leq \cdots  \leq \mu^{k+1} e^{-\lambda_{a} | \Sigma_{s} (t_{0}, t)|} e^{\lambda_{b} | \Sigma_{a} (t_{0}, t)| } V_{a}(t_{0}). \label{VV7}
\end{align}

Case b): if $ t \in   [a_{k},t_{k+1}) $, it follows from (\ref{VV6}) that 
\vspace{-2pt}
\begin{align}
V(t) & \leq e^{\lambda_{b}(t-a_{k}) } V_{b}(a_{k}) \leq \mu e^{\lambda_{b}(t-a_{k}) } V_{b}(a^{-}_{k}) \notag \\
& \leq \mu e^{\lambda_{b}(t-a_{k})} e^{-\lambda_{a}(a_{k}-a_{k-1}) } [  \mu V_{b}(a^{-}_{k-1})  ] \notag \\ 
& \leq \mu^{2} e^{\lambda_{a}(t-a_{k})} e^{-\lambda_{a}(a_{k}-a_{k-1}) }  [ e^{\lambda_{b}(a_{k-1}-a_{k-2})} V_{b}(a_{k-2})  ]   
\notag \\ 
& \leq \cdots \notag \\
& \leq \mu^{k+1} e^{-\lambda_{a} | \Sigma_{s} (t_{0}, t)|} e^{\lambda_{b} | \Sigma_{a} (t_{0}, t)| } V_{a}(t_{0}). \label{VV8} 
\end{align} 
 
iv) bounds on DoS attack frequency and duration

Notice that  $N_{a}(t_{0},t)=k+1$ for $t \in [t_{k},a_{k})$ or $t \in [a_{k},t_{k+1})$. Thus, for $\forall t\geq t_{0},$ it follows from (\ref{VV7}) and (\ref{VV8}) that 
\vspace*{-1pt}
\begin{equation}
V(t)\leq \mu^{N_{a}(t_{0},t)}e^{-\lambda_{a} |\Xi_{s}(t_{0},t)|}e^{\lambda_{b}|\Xi_{a}(t_{0},t)|}V(t_{0}).
\label{VV9}
\end{equation}

Notice that for all $t\geq t_{0},$ $|\Xi_{s}(t_{0},t)|=t-t_{0}-|\Xi_{a}(t_{0},t)|$ and $|\Xi_{a}(t_{0},t)|\leq T_{0}+(t-t_{0})/T_{a} $ by  Definition \ref{Attack Duration}. Thus, we have 
\vspace*{-2pt}
\begin{align}
&-\lambda_{a}(t-t_{0}-|\Xi_{a}(t_{0},t)|)+\lambda_{b}|\Xi_{a}(t_{0},t)|  \notag \\
&= -\lambda_{a}(t-t_{0})+(\lambda_{a}+\lambda_{b})  |\Xi_{a}(t_{0},t)|  \notag \\
&\leq -\lambda_{a}(t-t_{0})+(\lambda_{a}+\lambda_{b}) [T_{0}+(t-t_{0})/T_{a}].  \label{VV10}
\end{align}

Substituting (\ref{VV10}) into (\ref{VV9}) yields
\vspace*{-2pt}
\begin{align}
V(t) &\leq \mu ^{N_{a}(t_{0},t)}e^{-\lambda_{a}(t-t_{0}-| \Xi_{a}(t_{0},t)|)}e^{\lambda_{b}|\Xi_{a}(t_{0},t)|}V(t_{0})  \notag \\
&\leq e^{(\lambda_{a}+\lambda_{b})T_{0}}e^{-\lambda_{a}(t-t_{0})}e^{\frac{(\lambda_{a}+\lambda_{b})}{T_{a}}(t-t_{0})}
\notag \\
& \ \ \ \times e^{\ln (\mu ) N_{a}(t_{0},t)}V(t_{0}).  \label{VV11}
\end{align}

By exploiting the attack condition in (\ref{Condition1}), we can obtain that 
\vspace*{-3pt}
\begin{equation}
\ln(\mu) N_{a}(t_{0},t) \leq \ln(\mu) N_{0}+ \eta^{*}(t-t_{0}).   
\label{VV111}
\end{equation} 

Let $\eta=\lambda_{\alpha}-(\lambda _{a}+\lambda _{b})/T_{a}-\eta^{\ast}>0$. 
Based on another attack condition in (\ref{Condition2}), and using (\ref{VV111}), we further rewrite (\ref{VV11}) as
\vspace*{-2pt}
\begin{equation}
V(t)\leq e^{( \lambda_{a}+ \lambda_{b}) T_{0} + \ln(\mu) N_{0}} \ e^{-\eta (t-t_{0})}V(t_{0}).
\label{VV12}
\end{equation} 

Further, it follows from (\ref{VV1}), (\ref{VV2}), (\ref{VV6})  and (\ref{VV12}) that 
\vspace*{-3pt}
\begin{equation}
||\tilde{\chi}(t)||^{2} \leq \varsigma e^{-\eta
	(t-t_{0})} ||\tilde{\chi}(t_{0})||^{2},   
\label{VV13}
\end{equation} 
where $ \varsigma= e^{(\lambda_{a}+ \lambda_{b})T_{0} +2 \ln(\mu) } \varsigma_{a}/\varsigma_{b}$, $ \varsigma_{a}= \max\{\lambda_{max}(\varGamma_{a}),\lambda_{max}( \\ \varGamma_{b})\}$, and $ \varsigma_{b}= \min\{\lambda_{min}(\varGamma_{a}),\lambda_{min}(\varGamma_{b})\}$. 

Thus, it follows from (\ref{VV13}) that  $ \tilde{x}-X\tilde{\varrho} $, $ \tilde{\varrho}  + \tilde{z} $, and $ \tilde{y} $ are bounded, and converge to zero exponentially. That is,  $ \lim_{t\rightarrow \infty} (\tilde{x}-X\tilde{\varrho})=\textbf{0}$, $ \lim_{t\rightarrow \infty} (\tilde{\varrho}  + \tilde{z})=\textbf{0}$, and $ \lim_{t\rightarrow \infty} \tilde{y} =\textbf{0}$. It follows from  $ C_{i}X_{i}=I_{q} $ in (\ref{RegulationEquation}) that $ \lim_{t\rightarrow \infty} C(\tilde{x}-X\tilde{\varrho})=\lim_{t\rightarrow \infty}(\tilde{y}-\tilde{\varrho})=\textbf{0}$, i.e., $ \lim_{t\rightarrow \infty}\tilde{\varrho} =\textbf{0}  $. Thus, $ \lim_{t\rightarrow \infty}\tilde{z} =\textbf{0}  $. 
Then, based on $ \lim_{t\rightarrow \infty} \tilde{y} =\textbf{0}$, the problem in (\ref{P2}) is solvable. Hence, it can be concluded that $ y_{i} $ converges to $y^{*}$ exponentially for heterogeneous linear multi-agent systems under DoS attacks over random digraphs.    
\end{proof}

In the absence of attacks, Theorem 1 is reduced as follows.  

\vspace*{2pt} 
\textbf{Corollary 1:} 
In the absence of attacks, the optimal solution is achieved  under the following distributed algorithm, provided that $A_{i}-B_{i}K_{i}$ is Hurwitz and $ (U_{i}, W_{i},  X_{i})$ is a solution to (\ref{RegulationEquation}). 
\vspace{-3pt}
\begin{subequations}\label{Controller1}	
	\begin{align}
	u_{i}&=-K_{i}x_{i}-(U_{i}-K_{i}X_{i})\varrho_{i}+W_{i}\vartheta_{i},  \label{C1} \\
	\dot{\varrho}_{i}&=\vartheta_{i}=- \beta \sum_{j\in \mathcal{N}_{i}(t) }a_{ij}(t) (\varrho_{i}-\varrho_{j} + z_{i}-z_{j})   \notag \\
	& \ \ \ \ \ \ -\triangledown f_{i}(y_{i}) - \alpha \beta \sum_{j\in \mathcal{N}_{i}(t) }a_{ij}(t) (y_{i}-y_{j}), \label{C2} \\
	\dot{z}_{i} & = \alpha \beta \sum_{j\in \mathcal{N}_{i}(t) }a_{ij}(t) (y_{i}-y_{j}),  i \in \mathcal{V}. \label{C3}
	\end{align}
\end{subequations}
 
\begin{proof}
the proof is similar to Step (i), and is omitted.		
\end{proof}

\vspace*{-7pt}
\subsection{Distributed Event-Triggered Strategy Against DoS Attacks}
\vspace*{-1pt} 
The time-based scheme in the subsection A is implemented by fixed and periodic samplings, which require all agents to exchange information synchronously.
Let $ \{t^{i}_{k}\} $ denote a sequence of some transmission time instants with $ 0 = t^{i}_{0} < t^{i}_{1} < t^{i}_{1} \cdots <  t^{i}_{k}< \cdots $. Unlike known time-based transmission time instants $ t_{k} $, an event-triggered transmission attempt $ t_{k}^{i} $ will be scheduled to be resilient against DoS attacks. If $ t_{k}^{i} \in \Xi_{a}(t_{0},t)$, an  information transmission attempt suffers from the DoS attack, and thereby is unsuccessful. If $ t_{k}^{i} \in \Xi_{s}(t_{0},t)$, the information can be successfully transmitted. It is thus desirable to develop a new way to update the distributed optimization algorithm and determine the next triggering instants for information transmissions over insecure communication. The main challenges include twofolds: \textit{i) how to design a distributed dynamic event-triggered condition without Zeno behavior, and ii) how to illustrate the validity of distributed optimization algorithm to guarantee global exponential convergence.}  
 
\vspace{2pt} 
Now, propose the following distributed optimization algorithm to handle the effect of DoS attacks over random digraphs.
\vspace{-3pt}
\begin{subequations}\label{Controller3}	
\begin{align}
u_{i}&=-K_{i}x_{i}-(U_{i}-K_{i}X_{i})\varrho_{i}+W_{i}\vartheta_{i},  \label{TCC1} \\
\dot{\varrho}_{i}&=\vartheta_{i}= -\triangledown f_{i}(y_{i}) -\beta \bar{e}_{\varrho z i} - \alpha \beta \bar{e}_{y i} , \label{TCC2} \\
\dot{z}_{i} & = \alpha \beta \bar{e}_{y i},   t \in [t^{i}_{k}, t^{i}_{k+1}),  \ i \in \mathcal{V},  \label{TCC3} 
\end{align}
\end{subequations}
where the consensus errors under DoS attacks are described by
\vspace{-3pt} 
\begin{equation}
\bar{e}_{\varrho z i} = 
\left\{
\begin{array}{c} 
e_{\varrho z i}, \   \text{ if} \  t \in   [ t^{i}_{k}, t^{i}_{k+1})  \ \cap \ t=t^{i}_{k} \in \Xi_{s}(t_{0},t),   \\  
\hspace{-6.3em}
\textbf{0},  \ \  \text{ if} \ t=t^{i}_{k} \in   \Xi_{a}(t_{0},t),  
\end{array} 
\right.   \label{TCC4} 
\end{equation}
\vspace{-3pt}
\begin{equation}
\bar{e}_{y i} = 
\left\{
\begin{array}{c} 
e_{y i}, \ \ \ \text{ if} \   t \in [t^{i}_{k}, t^{i}_{k+1})  \ \cap   \ t=t^{i}_{k} \in \Xi_{s}(t_{0},t),  \\  
\hspace{-6.3em}
\textbf{0},  \ \ \ \text{ if} \ t=t^{i}_{k} \in   \Xi_{a}(t_{0},t),  
\end{array} 
\right. \label{TCC5}
\end{equation}
where $e_{\varrho z i}= \sum_{j\in \mathcal{N}_{i}(t) }a_{ij}(t) (\hat{\varrho}_{i}-\hat{\varrho}_{j} + \hat{z}_{i}-\hat{z}_{j}) $,  $ \hat{\varrho}_{i}=\varrho_{i}(t^{i}_{k}) $, $ \hat{z}_{i}=z_{i}(t^{i}_{k}) $, and $ e_{y i}=\sum_{j\in \mathcal{N}_{i}(t)} a_{ij}(t) (\hat{y}_{i}-\hat{y}_{j})$, $ \hat{y}_{i}=y_{i}(t^{i}_{k}) $.

\vspace{3pt} 
To specify event time instants, denote the measurement errors 
\vspace*{-3pt}
\begin{equation}
\tilde{e}_{yi}=\hat{y}_{i}(t)-y_{i}(t),  \ \tilde{e}_{\varrho z i}=\hat{\varrho}_{i}(t)+ \hat{z}_{i}(t)-(\varrho_{i}(t)+ z_{i}(t)). 
\label{MeasurementError}
\end{equation} 
 
Then, a dynamic event-triggering scheme is developed as 
\vspace{-1pt} 
\begin{equation}
\hspace{-0.3em}
t^{i}_{k+1}= 
\left\{
\begin{array}{c} 
\hspace{-0.5em}
\underset{t>t^{i}_{k}}{ \inf} \{t-t_{k}^{i} |  \sigma_{gi}
g_{i}(t) \leq \eta_{gi}(t) \ \text{or} \ \sigma_{hi} h_{i}(t) \leq \eta_{hi}(t) \}, \\  \text{ if}  \ t^{i}_{k} \in \Xi_{s}(t_{0},t),   \\   
\hspace{-6.6em}
t_{k}^{i}+\kappa^{i}_{k}, \ \ \ \ \ \ \ \ \  \text{ if}  \  t^{i}_{k} \in  \Xi_{a}(t_{0},t),  
\end{array} 
\right.  \label{EV1} 
\end{equation}
where  $\kappa^{i}_{k}>0 $ is a dwell time to be determined, $ \sigma_{gi}>0 $, $ \sigma_{hi}>0 $, and the triggering functions $ g_{i}(t) $, $ h_{i}(t) $ are given by
\vspace{-3pt} 
\begin{equation} \label{triggeringfunction} 
g_{i}(t)=\tilde{e}_{y i}^{2} - \theta_{gi}e_{yi}^{2},
\ h_{i}(t)= \tilde{e}^{2} _{\varrho zi}- \theta_{hi} e^{2}_{\varrho z i},  
\end{equation}
where  $ \theta_{gi}$, $\theta_{hi} \in [0,1) $, and $ \eta_{gi}(t)$, $\eta_{hi}(t)$ are auxiliary variables satisfying for $ \eta_{gi}(t_{0})$, $ \eta_{hi}(t_{0})$, $ k_{gi}, k_{hi} >0$, and $ \delta_{gi}, \delta_{hi} \in [0,1) $,
\vspace{-5pt} 
\begin{equation}
\dot{\eta}_{gi} = 
\left\{
\begin{array}{c} 
\hspace{-0.6em}
-k_{gi}\eta_{gi}-\delta_{gi}g_{i}(t),    
  \ \  \text{ if}  \  t^{i}_{k}  \in   \Xi_{s}(t_{0},t),  \\ 
\ \ \ \ \ \ \ 0 , \ \  \ \ \ \ \ \ \ \ \ \ \ \   \text{ if}  \  t^{i}_{k} \in  \Xi_{a}(t_{0},t),
\end{array} 
\right. 
 \label{EV3} 
\end{equation}
\begin{equation}
\dot{\eta}_{hi} = 
\left\{
\begin{array}{c} 
\hspace{-0.6em}
-k_{hi}\eta_{hi}-\delta_{hi}h_{i}(t),    
\ \  \text{ if}  \  t^{i}_{k}  \in   \Xi_{s}(t_{0},t),  \\ 
\ \ \ \ \ \ \ 0 , \ \  \ \ \ \ \ \ \ \ \ \ \ \ \   \text{ if}  \   t^{i}_{k} \in   \Xi_{a}(t_{0},t). 
\end{array} 
\right. 
\label{EV4} 
\end{equation}
 
\vspace{-2pt} 
Next, we firstly introduce the following lemma. 

\begin{lemma} \label{lemma4}
Under the dynamical event-triggered scheme (\ref{EV1}), the term $ \eta_{i}(t)$ below is always positive for $ \forall \eta_{gi}(t_{0}), \eta_{hi}(t_{0})>0  $,   
\vspace{-9pt}
\begin{equation}
\eta_{i}(t)= \eta_{gi}(t)+\eta_{hi}(t)>0, \  \forall i \in \mathcal{V}. \label{EV6}
\end{equation}
\end{lemma}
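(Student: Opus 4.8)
The plan is to establish positivity of $\eta_{gi}(t)$ and $\eta_{hi}(t)$ \emph{separately}, from which $\eta_i(t)=\eta_{gi}(t)+\eta_{hi}(t)>0$ follows at once. Both auxiliary variables obey structurally identical dynamics (\ref{EV3})--(\ref{EV4}), so I would carry out the full argument for $\eta_{gi}$ and remark that the case of $\eta_{hi}$ is verbatim with $(h_i,k_{hi},\delta_{hi},\sigma_{hi})$ in place of $(g_i,k_{gi},\delta_{gi},\sigma_{gi})$. Because the dynamics in (\ref{EV3}) are selected according to the status of the triggering instant $t_k^i$, each inter-event interval $[t_k^i,t_{k+1}^i)$ is entirely of ``attack'' type or of ``attack-free'' type; I would therefore treat these two cases and then glue the per-interval estimates together by induction on $k$.

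First I would dispose of the attack intervals: whenever $t_k^i\in\Xi_a(t_0,t)$, the rule (\ref{EV3}) gives $\dot\eta_{gi}=0$, so $\eta_{gi}$ is frozen at its entry value and positivity is trivially preserved. On an attack-free interval with $t_k^i\in\Xi_s(t_0,t)$, the defining property of the triggering rule (\ref{EV1}) is that no event fires strictly before $t_{k+1}^i$; reading this as enforcing the maintaining condition throughout the interval gives $\sigma_{gi}g_i(t)\le\eta_{gi}(t)$, i.e. $g_i(t)\le\eta_{gi}(t)/\sigma_{gi}$ for all $t\in[t_k^i,t_{k+1}^i)$. Substituting into $\dot\eta_{gi}=-k_{gi}\eta_{gi}-\delta_{gi}g_i(t)$ and using $\delta_{gi}\ge 0$ yields
\begin{equation}
\dot\eta_{gi}(t)\ \ge\ -\left(k_{gi}+\frac{\delta_{gi}}{\sigma_{gi}}\right)\eta_{gi}(t),\qquad t\in[t_k^i,t_{k+1}^i).
\label{etabound}
\end{equation}
I would emphasize that this step is insensitive to the sign of $g_i(t)$: even when $g_i(t)<0$ (as it is right after a reset, where $g_i=-\theta_{gi}e_{yi}^2\le 0$ by (\ref{triggeringfunction})), the bound $-\delta_{gi}g_i(t)\ge-\delta_{gi}\eta_{gi}(t)/\sigma_{gi}$ remains valid.

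Applying the comparison lemma to (\ref{etabound}) gives the exponential lower bound $\eta_{gi}(t)\ge\eta_{gi}(t_k^i)\,\exp\{-(k_{gi}+\delta_{gi}/\sigma_{gi})(t-t_k^i)\}$, so $\eta_{gi}(t)>0$ on the whole interval whenever $\eta_{gi}(t_k^i)>0$. Since $\eta_{gi}$ is continuous across the switching instants — the triggering mechanism resets only the sampled states $\hat y_i,\hat\varrho_i,\hat z_i$, not $\eta_{gi}$ — I can chain these per-interval estimates: starting from $\eta_{gi}(t_0)>0$, positivity propagates through every subsequent attack and attack-free segment by induction on $k$, so $\eta_{gi}(t)>0$ for all $t\ge t_0$. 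The identical computation yields $\eta_{hi}(t)>0$, and summing gives (\ref{EV6}).

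The main obstacle I anticipate is precisely the step producing (\ref{etabound}): one must extract from the trigger rule (\ref{EV1}) a \emph{one-sided} bound on $g_i(t)$ that turns the $-\delta_{gi}g_i$ term into a lower bound on $\dot\eta_{gi}$, which requires interpreting (\ref{EV1}) so that both $\sigma_{gi}g_i\le\eta_{gi}$ and $\sigma_{hi}h_i\le\eta_{hi}$ are maintained simultaneously on each inter-event interval (the ``or'' in the trigger keeps both inequalities active until one is violated). Once this bound is in place, the remaining work — continuity of $\eta_{gi}$ at the hybrid switching times and the induction stitching the attack-free and attack segments together — is routine bookkeeping.
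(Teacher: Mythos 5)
Your proposal is correct and follows essentially the same route as the paper's proof: split time into attack-free and attack-affected inter-event intervals, use the trigger rule to obtain $g_i(t)\le\eta_{gi}(t)/\sigma_{gi}$ on the former so that $\dot\eta_{gi}\ge-(k_{gi}+\delta_{gi}/\sigma_{gi})\eta_{gi}$ gives an exponential lower bound, note that $\eta_{gi}$ is frozen on the latter, and chain the estimates by induction to get $\eta_{gi}>0$ and, verbatim, $\eta_{hi}>0$. Your added remarks on continuity of $\eta_{gi}$ at switching instants and on the sign-insensitivity of the bound on $-\delta_{gi}g_i$ are details the paper leaves implicit, not a different argument.
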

 
\vspace{-3pt} 
\begin{proof}
under the proposed event-triggered scheme  (\ref{EV1})-(\ref{EV4}), the transmission attempts $\{t^{i}_{k}\}_{k \in \mathrm{N}^{+}}= \{t^{i}_{k_{s}}\}_{s \in \mathrm{N}^{+}}  \bigcup \ \{t^{i}_{k_{a}}\}_{a \in \mathrm{N}^{+}} $ can be generated, where $ \{t^{i}_{k_{s}}\}_{s \in \mathrm{N}^{+}} $ denotes the successful transmission attempts while $ \{t^{i}_{k_{a}}\}_{a \in \mathrm{N}^{+}} $ denotes the unsuccessful transmission attempts. Next, two cases are considered: 

Case i): choose any $ t^{i}_{k_{s_{0}}} \in \Xi_{s}(t_{0},\infty) $  and let $ a_{1}=\min_{a} \{ t^{i}_{k_{a}} >t^{i}_{k_{s_{0}}} \} $. Then, when $ t \in [t^{i}_{k}, t^{i}_{k+1}) $,  $ \tilde{e}^{2}_{y i}(t) - \theta_{gi}  e^{2}_{yi}\leq \eta_{gi}(t)/\sigma_{gi}. $
Hence, we can obtain from (\ref{EV3}) that 
\vspace{-3pt} 
\begin{equation}
\dot{\eta}_{gi}(t) = -k_{gi}\eta_{gi}(t) -  \delta_{gi}g_{i}(t) \geq - (k_{gi} + \delta_{gi}/ \sigma_{gi} ) \eta_{gi}(t), \label{EV7}
\end{equation}
which implies that based on the mathematical induction, we have that for $ t \in [t^{i}_{k}, t^{i}_{k+1}) \in [t^{i}_{k_{s_{0}}}, t^{i}_{k_{a_{1}}})$,  
\vspace{-2pt} 
\begin{align}
\eta_{gi}(t)  &\geq \eta_{gi}(t^{i}_{k}) e ^{-  (k_{gi} + \delta_{gi}/ \sigma_{gi} ) (t-t^{i}_{k}) } \geq  \cdots  \notag \\ 
&\geq \eta_{gi}(t^{i}_{k_{s_{0}}}) e ^{-  (k_{gi} + \delta_{gi}/ \sigma_{gi} ) (t-t^{i}_{k_{s_{0}}}) } >0. \label{EV8}
\end{align}

\vspace{-3pt}
Case ii): denote $ s_{1}=\min_{s} \{ t^{i}_{k_{s}} >t^{i}_{k_{a_{1}}} \} $, and we have that for $ t \in [t^{i}_{k}, t^{i}_{k+1}) \in [t^{i}_{k_{a_{1}}}, t^{i}_{k_{s_{1}}})$, it follows from (\ref{EV3}) that  $ \dot{\eta}_{gi}(t) =0 $. Thus, $ \eta_{gi}(t) $ is time-invariant and since $ \eta_{gi}(t_{0}) >0$, we have 
\vspace{-3pt} 
\begin{equation}
\eta_{gi}(t) = \eta_{gi}(t^{i}_{k}) =\cdots= \eta_{gi}(t^{i}_{k_{a_{1}}})>0. \label{EV9}
\end{equation}

\vspace{-3pt} 
Then, we obtain from both cases that $ \eta_{gi}(t)>0, \forall i \in \mathcal{V} $.

\vspace{1pt} 
Similarly, it can be derived from (\ref{EV4}) that $ \eta_{hi}(t)>0, \forall i \in \mathcal{V} $ for both cases by following the same procedure.
\end{proof}

\vspace{2pt}
Substituting (\ref{Controller3})-(\ref{TCC5}) into (\ref{Dynamics}) yields the following system 
\vspace{-3pt}
\begin{equation} \label{Controller3Compact}
\dot{\chi} = 
\left\{ 
\begin{array}{l}
\left[ \begin{array}{ccc}
(A-BK)x+BW \vartheta-B(U-KX)\varrho  \\
-\triangledown \tilde{f}(y) - \alpha \beta \tilde{\mathcal{L}}(t) \hat{y} - \beta \tilde{\mathcal{L}}(t) (\hat{\varrho} + \hat{z}) \\
\alpha \beta  \tilde{\mathcal{L}}(t) \hat{y}
\end{array} \right],   \\  \text{ if} \  t \in  [ t^{i}_{k}, t^{i}_{k+1})   \cap  t=t^{i}_{k} \in \Xi_{s}(t_{0},t),   \\
\left[ \begin{array}{ccc}
(A-BK)x+BW \vartheta-B(U-KX)\varrho  \\
-\triangledown \tilde{f}(y)  \\
\textbf{0}
\end{array} \right],   \\  \text{ if}  \  t=t^{i}_{k} \in \Xi_{a}(t_{0},t), k=0,1,2\cdots,
\end{array}
\right. 	
\end{equation} 
where $ \hat{y}, \hat{\varrho} , \hat{z} $ are the stack vectors of $ \hat{y}_{i}, \hat{\varrho}_{i} , \hat{z}_{i} $, respectively, and 
\hspace{-5pt} 
\begin{equation}
\hat{y}_{i}(t) = y_{i}+\tilde{e}_{yi}, \ \hat{\varrho}_{i}(t) = \varrho_{i}+\tilde{e}_{\varrho i}, \ \hat{z}_{i}(t) = z_{i}+\tilde{e}_{zi}. \label{EV10}
\end{equation}

  
\begin{theorem} \label{Theorem3}
Given Assumptions \ref{Assumption2}-\ref{Assumption4}, Problem 1 is solvable for any $ x_{i}(0)$, $\varrho_{i}(0) $ and $z_{i}(0) $ under the proposed resilient distributed optimization algorithm in (\ref{Controller3})-(\ref{TCC5}) together with the distributed event-triggered scheme in (\ref{EV1})-(\ref{EV4}), provided that $A_{i}-B_{i}K_{i}$ is Hurwitz, $ U_{i}, W_{i},  X_{i}$ are solutions to (\ref{RegulationEquation}), and 
\vspace*{-3pt}
\begin{equation}
T_{f} >(\ln(\tilde{\mu})+(\tilde{\lambda}_{a}+\tilde{\lambda}_{b})\kappa_{*})/\tilde{\eta}^{\ast}, \ T_{a}> (\tilde{\lambda}_{a}+\tilde{\lambda}_{b}) /(\tilde{\lambda}_{a}-\tilde{\eta}^{\ast }),
\label{Condition11}
\end{equation}
where $\tilde{\lambda}_{a}, \tilde{\lambda}_{b} , \tilde{u} , \kappa_{*}$ are  positive scalars to be determined later.
\end{theorem}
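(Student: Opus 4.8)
The plan is to mirror the four-step argument used for Theorem \ref{Theorem2}, augmenting the Lyapunov analysis so that the sampled-data measurement errors $\tilde{e}_{yi}$, $\tilde{e}_{\varrho z i}$ of (\ref{MeasurementError}) and the dynamic triggering variables $\eta_{gi},\eta_{hi}$ are absorbed. First I would treat the attack-free intervals, i.e. $t^{i}_{k}\in\Xi_{s}(t_0,t)$. Here the closed-loop system (\ref{Controller3Compact}) differs from the attack-free form (\ref{Controller1Compact}) only through the substitution of the held signals $\hat{y},\hat{\varrho},\hat{z}$ for the continuous ones, that is, through the errors in (\ref{MeasurementError}). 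In the same coordinates $\tilde{\chi}=\text{col}(\tilde{x}-X\tilde{\varrho},\tilde{\varrho}+\tilde{z},\tilde{y})$ I would take the augmented candidate $\tilde{V}_{a}=V_{a}+\sum_{i\in\mathcal{V}}(\eta_{gi}+\eta_{hi})$, with $V_{a}$ as in (\ref{VV1}). Differentiating, the cross terms now carry extra contributions proportional to $\|\tilde{e}_{yi}\|$ and $\|\tilde{e}_{\varrho z i}\|$; by the triggering rule (\ref{EV1})--(\ref{triggeringfunction}) the bounds $\tilde{e}_{yi}^{2}\le\theta_{gi}e_{yi}^{2}+\eta_{gi}/\sigma_{gi}$ and $\tilde{e}_{\varrho z i}^{2}\le\theta_{hi}e_{\varrho z i}^{2}+\eta_{hi}/\sigma_{hi}$ hold between successful transmissions (exactly as extracted in the proof of Lemma \ref{lemma4}). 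Substituting these and using the auxiliary dynamics (\ref{EV3})--(\ref{EV4}) to cancel the $\delta_{gi}g_{i}$ and $\delta_{hi}h_{i}$ terms against $\dot{\eta}_{gi},\dot{\eta}_{hi}$, the residual is again a negative-definite quadratic form in $\tilde{\chi}$ — provided $\theta_{gi},\theta_{hi}$ are chosen small and $\alpha,\beta$ large, with the graph quadratic term lower-bounded via Lemma \ref{Lemma3}. This yields $\dot{\tilde{V}}_{a}\le-\tilde{\lambda}_{a}\tilde{V}_{a}$ on $\Xi_{s}$.

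Next I would treat the attack intervals $t^{i}_{k}\in\Xi_{a}(t_0,t)$. Since the held increments are reset to $\textbf{0}$ in (\ref{TCC4})--(\ref{TCC5}), the system (\ref{Controller3Compact}) collapses to precisely the consensus-free form already analysed in Step (ii) of Theorem \ref{Theorem2}; with the auxiliary variables frozen ($\dot{\eta}_{gi}=\dot{\eta}_{hi}=0$), the same computation that produced (\ref{VV3}) gives $\dot{\tilde{V}}_{b}\le\tilde{\lambda}_{b}\tilde{V}_{b}$ for $\tilde{V}_{b}=V_{b}+\sum_{i\in\mathcal{V}}(\eta_{gi}+\eta_{hi})$, with $V_{b}$ and $\tilde{\varGamma}_{b}$ of the form (\ref{VV2}). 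I would then run the identical hybrid/switching accumulation as in (\ref{VV6})--(\ref{VV12}): define the piecewise function $\tilde{V}$, set $\tilde{\mu}=\max\{\lambda_{\max}(\tilde{\varGamma}_{a})/\lambda_{\min}(\tilde{\varGamma}_{b}),\lambda_{\max}(\tilde{\varGamma}_{b})/\lambda_{\min}(\tilde{\varGamma}_{a})\}\ge 1$, and collect the per-switch factors. The one new ingredient is that after an attack the dwell-time branch $t^{i}_{k+1}=t^{i}_{k}+\kappa^{i}_{k}$ of (\ref{EV1}) may delay the first post-attack transmission by up to $\kappa_{*}=\sup_{i,k}\kappa^{i}_{k}$, during which the controller still runs with stale information; this contributes an extra growth of at most $(\tilde{\lambda}_{a}+\tilde{\lambda}_{b})\kappa_{*}$ per attack, which is precisely the term appearing in the modified frequency bound $T_{f}>(\ln(\tilde{\mu})+(\tilde{\lambda}_{a}+\tilde{\lambda}_{b})\kappa_{*})/\tilde{\eta}^{\ast}$ in (\ref{Condition11}). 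Invoking Definitions \ref{Attack Frequency}--\ref{Attack Duration} and the conditions (\ref{Condition11}) then yields $\tilde{V}(t)\le\tilde{\varsigma}\,e^{-\tilde{\eta}(t-t_{0})}\tilde{V}(t_{0})$ with $\tilde{\eta}=\tilde{\lambda}_{a}-(\tilde{\lambda}_{a}+\tilde{\lambda}_{b})/T_{a}-\tilde{\eta}^{\ast}>0$, so that $\tilde{\chi}\to\textbf{0}$ exponentially and, exactly as in the closing paragraph of the proof of Theorem \ref{Theorem2}, $y_{i}\to y^{*}$.

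The hard part will be closing the event-triggered loop without Zeno behaviour, i.e. exhibiting a uniform strictly positive lower bound on the inter-event times that also furnishes $\kappa_{*}$. By Lemma \ref{lemma4} the variables $\eta_{gi},\eta_{hi}$ stay strictly positive, so the thresholds in (\ref{EV1}) never collapse; to convert this into a dwell time I would reset $\tilde{e}_{yi}$ (resp. $\tilde{e}_{\varrho z i}$) to zero at each successful trigger and bound $\tfrac{d}{dt}(\tilde{e}_{yi}^{2})$ along the continuous flow of (\ref{Controller3Compact}), showing the error needs a finite time to grow from $0$ to the threshold $\theta_{gi}e_{yi}^{2}+\eta_{gi}/\sigma_{gi}$. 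The delicate point is that $e_{yi},e_{\varrho z i}$ depend on the neighbours' held states over the \emph{random} digraph, so this growth estimate must be made uniform over the finite index set $\mathcal{S}$ of possible topologies; taking the worst case over $\mathcal{S}$ yields a common positive dwell time and fixes $\kappa_{*}$. Finally, the consistency requirements $\tilde{\eta}^{\ast}\in(0,\tilde{\lambda}_{a})$ and $\tilde{\mu}\ge 1$ must be verified to be simultaneously realisable together with the gain/threshold choices forcing $\tilde{\varOmega}_{a}>0$ — establishing this feasibility, rather than the switching bookkeeping, is where the real work lies.
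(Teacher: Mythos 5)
Your proposal is correct and follows essentially the same route as the paper: the augmented Lyapunov functions $W_a=V_a+\sum_i\eta_i$ and $W_b$, the enlarged attack intervals $[a_m,a_m+\tau_m+\kappa_*)$ accounting for the post-attack retry delay, the piecewise switching accumulation with $\tilde{\mu}$, and the frequency/duration bookkeeping that produces the extra $(\tilde{\lambda}_a+\tilde{\lambda}_b)\kappa_*$ term in (\ref{Condition11}) all match the paper's Steps 1--2(iv). The only organizational difference is that the paper defers the Zeno-exclusion/dwell-time argument you flag as ``the hard part'' to a separate result (Theorem \ref{Theorem4}) rather than folding it into this proof.
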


\begin{proof}
the proof includes the following steps:  

\vspace*{1pt}
\textbf{Step 1} (two intervals classification): two time intervals where (\ref{EV1})-(\ref{EV4}) holds and does not hold are characterized. Consider sequences: $\{t_{k}^{i}\}_{k \in \mathbb{N}^{+}}$ and  $\{a_{k}\}_{k\in \mathbb{N}^{+}}.$ The following $\mathcal{F}:=\{(i,k)\in \mathcal{V\times}\mathbb{N}^{+} \text{ }|t_{k}^{i}\in \cup _{k\in \mathbb{N}^{+}} \mathcal{A}_{k}\}$ is a set of integers related to an updating attempt occurring under attacks. 
Due to the finite sampling rate, a time interval will necessarily elapse from $t^{i}_{k}+a_{k}$ to the time when agents successfully sample and transmit. It is upper bounded by $\sup_{(i,k)\in \mathcal{F}} \kappa^{i}_{k} \leq \kappa_{*}$. Hence, a DoS free interval of a length greater than $\kappa_{*}$ ensures that each agent can sample and transmit. The $m$-th time interval where (\ref{EV1})-(\ref{EV4}) do not need to hold is 
\vspace*{-4pt}
\begin{equation}
\mathfrak{A}_{m}=[a_{m},a_{m}+\tau_{m}+\kappa_{*}).
\label{e_i does not need to hold}
\end{equation}

\vspace*{-2pt}
Thus, the time interval $[\tau ,t)$ consists of the following two union of sub-intervals: $[\tau ,t)=\tilde{\Xi}_{s}(\tau ,t)\cup \tilde{\Xi}_{a}(\tau,t)$ with
\vspace*{-2pt}
\begin{eqnarray}
\hspace{1.2em}
\tilde{\Xi}_{a}(\tau ,t) :=\cup \ \mathfrak{A}_{m}  \cap  [\tau ,t],  \  \tilde{\Xi}_{s}(\tau ,t):=[\tau ,t]\backslash   \tilde{\Xi}_{a}(\tau,t).
\label{Success interval}
\end{eqnarray}

\textbf{Step 2} (Lyapunov stability analysis) \\
i) consider the time interval $ \tilde{\Xi}_{s}(\tau ,t)$ over which (\ref{EV1})-(\ref{EV4}) hold. Denote  $ \tilde{\varGamma}_{a}=\text{diag}\{2 \tilde{P}_{a},  \Pi \otimes I_{q}, \Pi \otimes I_{q} \} $ and then select 
\vspace{-6pt}
\begin{equation}
W_{a}=V_{a}+V_{\eta}=\frac{1}{2} \tilde{\chi}^{T} \tilde{\varGamma}_{a} \tilde{\chi} + \sum_{i=1}^{N} \eta_{i}(t).
 \label{EEV1}
\end{equation}   

\vspace{-3pt}
Then, the time derivative of $ W_{a} $ along (\ref{Controller3Compact})-(\ref{EV10}) is given by 
\vspace{-1pt}
\begin{align}
\dot{W}_{a}
&= \tilde{y}^{T} \tilde{\Pi} C\tilde{A} (\tilde{x}-X\tilde{\varrho}) - (\tilde{y}+\tilde{\varrho}  + \tilde{z})^{T} \tilde{\Pi}  F + \tilde{y}^{T} \tilde{\Pi} (  - \alpha \beta \tilde{\mathcal{L}}(t) \tilde{y} \notag  \\
&   - \beta \tilde{\mathcal{L}}(t) (\tilde{\varrho}  + \tilde{z})) - (\tilde{x}-X\tilde{\varrho})^{T} O  (\tilde{x}-X\tilde{\varrho})   -\beta\tilde{y}^{T} \tilde{\Pi} \tilde{\mathcal{L}}(t) \notag \\
& \times (\alpha \tilde{e}_{y}  + \tilde{e}_{\varrho z}) -\beta (\tilde{\varrho}  + \tilde{z})^{T} [ Q(t) (\tilde{\varrho}  + \tilde{z}) + \tilde{\Pi} \tilde{\mathcal{L}}(t) \tilde{e}_{\varrho z}] + \dot{V}_{\eta} \notag \\  
&\leq  - [ \epsilon_{0} - \frac{||\tilde{A}||^{2}}{4 \epsilon_{1}}  ]   (\tilde{x}-X\tilde{\varrho})^{T}(\tilde{x}-X\tilde{\varrho}) + \epsilon_{1}||\tilde{\Pi} C ||^{2}  \tilde{y}^{T}\tilde{y}  \notag \\
&  -\iota_{min} \pi_{min} \tilde{y}^{T}  \tilde{y} -  \alpha \beta  \tilde{y}^{T} Q(t) \tilde{y} -  \beta  \tilde{y}^{T}\tilde{\Pi} \tilde{\mathcal{L}}(t) (\tilde{\varrho}  + \tilde{z}) \notag \\
&  + \frac{l^{2}_{max} \pi^{2}_{max}}{4 \epsilon_{2}} ||\tilde{y}||^{2}+ \epsilon_{2} ||\tilde{\varrho}  + \tilde{z}||^{2} -\beta (\tilde{\varrho}  + \tilde{z})^{T} Q(t) (\tilde{\varrho}  + \tilde{z}) \notag \\
&+ \frac{1}{4c_{1}} ||\alpha\beta  \tilde{\Pi} \tilde{\mathcal{L}}(t)||^{2}  ||\tilde{y}||^{2}  + c_{1} ||\tilde{e}_{y}||^{2}  +  \frac{1}{4c_{2}} || \beta  \tilde{\Pi} \tilde{\mathcal{L}}(t)||^{2} ||\tilde{y}||^{2} \notag \\
&  + c_{2} ||\tilde{e}_{\varrho z}  ||^{2} +  \frac{1}{4c_{2}} || \beta  \tilde{\Pi} \tilde{\mathcal{L}}(t)||^{2} ||\tilde{\varrho} + \tilde{z}||^{2} + c_{2} ||\tilde{e}_{\varrho z} ||^{2} \notag \\   
& - \sum_{i=1}^{N} k_{gi}\eta_{gi} -\sum_{i=1}^{N}\delta_{gi}   (\tilde{e}^{2}_{yi} - \theta_{gi} e^{2}_{yi} )  + \sum_{i=1}^{N} \theta_{gi} (e^{2}_{yi} - e^{2}_{yi}) \notag \\
& - \sum_{i=1}^{N} k_{hi}\eta_{hi} -\sum_{i=1}^{N}\delta_{hi}   (\tilde{e}_{\varrho zi}^{2} - \theta_{hi}e_{\varrho zi}^{2} )   + \sum_{i=1}^{N}  \theta_{hi} (e_{\varrho  zi}^{2} -e_{\varrho zi}^{2}) \notag \\
& \leq - \left[ \begin{array}{ccc}
\tilde{x}-X\tilde{\varrho}  \\
\tilde{\varrho}  + \tilde{z}   \\ 
\tilde{y}    
\end{array} \right] ^{T} \tilde{\varOmega}_{a} (t) \left[ \begin{array}{ccc}
\tilde{x}-X\tilde{\varrho}  \\
\tilde{\varrho}  + \tilde{z}   \\ 
\tilde{y}    
\end{array} \right] - \sum_{i=1}^{N} k_{gi}\eta_{gi} \notag \\
& \ \ \ + \sum_{i=1}^{N} (1-\delta_{gi})   (\tilde{e}^{2}_{yi} - \theta_{gi} e^{2}_{yi} )  - \sum_{i=1}^{N} k_{hi}\eta_{hi}   \notag \\
& \ \ \ + \sum_{i=1}^{N} (1-\delta_{hi}) [\tilde{e}_{\varrho zi}^{2} - \theta_{hi} e_{\varrho zi}^{2}]   \notag \\
& \leq - \tilde{\chi}^{T} \tilde{\varOmega}_{a} (t) \tilde{\chi} -  \sum_{i=1}^{N} k_{ci} \eta_{i}  \leq - \tilde{\lambda}_{a} W_{a},   \label{EEV2}
\end{align} 
where $ \tilde{\Pi}=\Pi  \otimes I_{q} $,  $ \tilde{\lambda}_{a}=\min \{ \lambda_{min} (\tilde{\varOmega}_{a} ), \underline{k}_{c} \} $, $ \underline{k}_{c}=\min  \{k_{ci} \}$, 
$ k_{ci}=\min\{k_{gi}-\frac{1-\delta_{gi}}{\sigma_{gi}} , k_{hi}-\frac{1-\delta_{hi}}{\sigma_{hi}} \} >0$,  $ c_{1}=1-c_{3}\overline{\theta}_{g} \|\tilde{L}\|^{2}>0$, 

$ c_{2}=(1-c_{3}\overline{\theta}_{h} \|\tilde{L}\|^{2})/2 >0$, $ \overline{\theta}_{g}=\max  \{\theta_{gi}\}$, $ \overline{\theta}_{h}=\max \{\theta_{hi}\}$, $ \tilde{\varOmega}_{a} $ is similar to $\varOmega_{a} $ with $ \tilde{\varOmega}^{2}_{a} =\varOmega^{2}_{a} - (\frac{1}{4c_{2}} \|\beta \tilde{\Pi} \tilde{L} \|^{2}+\frac{1}{4c_{3}}\|\tilde{L}\|^{2})\textbf{I} $, $ \tilde{\varOmega}^{3}_{a} =\varOmega^{3}_{a} -((\frac{1}{4c_{1}}+ \frac{1}{4c_{2}})\|\alpha \beta \tilde{\Pi} \tilde{L} \|^{2}+\frac{1}{4c_{3}}\|\tilde{L}\|^{2}) \textbf{I} $.  

\vspace{2pt}
ii) consider the time interval $ \tilde{\Xi}_{a}(\tau ,t)$ over which (\ref{EV1})-(\ref{EV4}) do not necessarily hold. 
Let $ \tilde{P}_{b}>0 $, $ \tilde{S}=\bar{S} \otimes I_{q} $ with $ \bar{S}> $ being a diagonal matrix and choose the Lyapunov function candidate as
\vspace{-5pt} 
\begin{equation}
W_{b}=\frac{1}{2} \tilde{\chi}^{T} \tilde{\varGamma}_{b}\tilde{\chi}+ \sum_{i=1}^{N}\eta_{i}(t), \  \tilde{\varGamma}_{b} = \left[ \begin{array}{ccc}
2\tilde{P}_{b} & \textbf{0} & \textbf{0} \\
* & \tilde{S}  & \textbf{0} \\
* & *  & \tilde{S} 
\end{array} \right]. \label{EEV3} 
\end{equation} 

Similar to (\ref{VV3}), the time derivative of $ W_{b} $ along (\ref{Controller3Compact}) is   
\vspace{-5pt}
\begin{equation}
\dot{W}_{b} \leq  \tilde{\chi}^{T} \tilde{\varOmega}_{b} \tilde{\chi}  \leq   \tilde{\lambda}_{b} W_{b} ,  
\label{EEV4}  
\end{equation}
where $ \lambda_{b}= \max \{ \sigma_{max} (\tilde{\varOmega}_{b} ), 1 \} >0 $, and $ \tilde{\varOmega}_{b} $ is similarly defined as (\ref{VV4}) with
$ \tilde{\epsilon}_{3}=\tilde{\epsilon}_{1}||(\tilde{S} \otimes I_{q} ) C ||^{2} + \frac{l^{2}_{max}\pi^{2}_{max}}{4 \tilde{\epsilon}_{2}} -\iota_{min} \lambda_{min}(\tilde{S})  $. 

\vspace{3pt}
iii) convergence analysis of a piecewise Lyapunov function.

\vspace{2pt}
Similar to (\ref{VV6})-(\ref{VV8}), let $ W(t)=W_{\tilde{\delta}(t)} $ and $ \tilde{\delta}(t) \in \{a, b\} $, where $ W_{a} $ and $W_{b}$ are defined in (\ref{EEV1}) and (\ref{EEV3}), respectively. Suppose that $ W_{a} $ is activated in $[a_{m-1}+\tau_{m-1},a_{m})$ and $ W_{b} $ is activated in $[a_{m},a_{m}+\tau_{m}+\kappa_{*})$. Then, by the Comparison lemma, we get
\vspace{-4pt}
\begin{equation}
W(t)=\left\{ 
\begin{array}{c}
\hspace{-0.3em}  
e^{-\tilde{\lambda}_{a}(t-t_{m-1}-\tau_{m-1}) } W_{a}(t_{m-1}+\tau_{m-1}) ,   \\ 
e^{\tilde{\lambda}_{b}(t-a_{m}) } W_{b}(a_{m}).
\end{array}%
\right. 
\label{EEV5} 
\end{equation}

\vspace{-2pt}
Denote $ \tilde{\mu} = \max\{ \lambda_{max}(\tilde{\varGamma}_{a})/\lambda_{min}(\tilde{\varGamma}_{b}),\lambda_{max}(\tilde{\varGamma}_{b})/\lambda_{min}(  \tilde{\varGamma}_{a}) \} \geq 1$.
Next, we discuss the situation in two cases: 

\vspace{3pt}
Case a): if $ t \in  [a_{m-1}+\tau_{m-1},a_{m}) $, it follows from (\ref{EEV5}) that 
\vspace{-5pt}
\begin{align}
W(t) & \leq e^{-\tilde{\lambda}_{a}(t-a_{m-1}-\tau_{m-1}) } W_{a}(a_{m-1}+\tau_{m-1})  \notag \\
& \leq \tilde{\mu} e^{-\tilde{\lambda}_{a}(t-a_{m-1}-\tau_{m-1})}    W_{b}(a^{-}_{m-1}+\tau^{-}_{m-1})  
\leq \cdots \notag \\
& \leq \tilde{\mu}^{m} e^{-\tilde{\lambda}_{a} | \tilde{\Sigma}_{s} (t_{0}, t)|} e^{\tilde{\lambda}_{b} |\tilde{ \Sigma}_{a} (t_{0}, t)| } W_{a}(t_{0}). \label{EEV6}
\end{align}

\vspace{-3pt}
Case b): if $ t \in  [a_{m},a_{m}+\tau_{m}+\kappa_{*}) $,  similarly  
\vspace{-5pt}
\begin{align}
W(t) & \leq e^{\tilde{\lambda}_{b}(t-a_{m}) } W_{b}(a_{m}) \leq \tilde{\mu} e^{\tilde{\lambda}_{b}(t-a_{m}) } W_{a}(a^{-}_{m}) \notag \\
& \leq e^{\tilde{\lambda}_{b}(t-a_{m}) } [ e^{-\tilde{\lambda}_{a}(a_{m}-a_{m-1}- \tau_{m-1}) } W_{a}(a_{m-1}+\tau_{m-1}) ]  \notag \\  
& \leq \cdots  \leq \tilde{\mu}^{m+1} e^{-\tilde{\lambda}_{a} | \tilde{\Sigma}_{s} (t_{0}, t)|} e^{\tilde{\lambda}_{b} | \tilde{\Sigma}_{a} (t_{0}, t)| } W_{a}(t_{0}). \label{EEV7} 
\end{align} 

\vspace{-2pt}
iv) bounds on the attack frequency and duration.

\vspace*{2pt}
From Definition \ref{Attack Frequency}, $N_{a}(t_{0},t)=m$ for $t \in  [a_{m-1}+\tau_{m-1},a_{m})$ and $m+1$ for $t \in   [a_{m},a_{m}+\tau_{m}+\kappa_{*})$. Thus, for $\forall t\geq t_{0}$, 
\vspace*{-2pt}
\begin{equation}
W(t)\leq \mu^{N_{a}(t_{0},t)}e^{-\tilde{\lambda}_{a} |\tilde{\Xi}_{s}(t_{0},t)|}e^{\tilde{\lambda}_{b}|\tilde{\Xi}_{a}(t_{0},t)|}W(t_{0}).
\label{EEV9}
\end{equation}

\vspace*{-2pt}
Notice that for all $t\geq t_{0},$ $|\tilde{\Xi}_{s}(t_{0},t)|=t-t_{0}-|\tilde{\Xi}_{a}(t_{0},t)|$ and $|\tilde{\Xi}_{a}(t_{0},t)| \leq |\Xi_{a}(t_{0},t)|+(1+N_{a}(t_{0},t))\kappa_{*} $. By Definition \ref{Attack Duration}, we have:  $- \tilde{\lambda}_{a}(t-t_{0}-|\tilde{\Xi}_{a}(t_{0},t)|)+\tilde{\lambda}_{b}|\tilde{\Xi}_{a}(t_{0},t)|= -\tilde{\lambda}_{a}(t-t_{0})+(\tilde{\lambda}_{a}+\tilde{\lambda}_{b}) [T_{0}+(t-t_{0})/T_{a}]$. Thus, it follows from (\ref{EEV9}) that 
 \vspace*{-2pt}
\begin{align}
W(t) & \leq  e^{(\tilde{\lambda}_{a}+\tilde{\lambda}_{b})T_{0}}e^{-\tilde{\lambda}_{a}(t-t_{0})}e^{\frac{(\tilde{\lambda}_{a}+\tilde{\lambda}_{b})}{\tau_{a}}(t-t_{0})}
\notag \\
& \ \ \ \times e^{[\ln (\tilde{\mu})+(\tilde{\lambda}_{a}+\tilde{\lambda}_{b}) N_{a}(t_{0},t)]}W(t_{0}).  \label{VVV11}
\end{align}

\vspace*{-3pt}
From (\ref{Condition11}), let $\tilde{\eta}=\tilde{\lambda}_{a}-(\tilde{\lambda}_{a}+\tilde{\lambda}_{b})/\tau _{a}-\tilde{\eta}^{\ast}>0$ so that
\vspace*{-2pt}
\begin{equation}
W(t)\leq e^{( \tilde{\lambda}_{a}+ \tilde{\lambda}_{b})(T_{0}+\kappa_{*}) +\tilde{u}N_{0}}e^{-\tilde{\eta}
	(t-t_{0})}W(t_{0}).  
\label{VVV12}
\end{equation} 

Thus, for certain scalar $ \tilde{\varsigma}>0 $, we can have 
\vspace*{-2pt}
\begin{equation}
\left\| \left[ \begin{array}{cc}
 \tilde{\chi}(t)   \\
 \eta (t)  
 \end{array} \right] \right\| ^{2} \leq \tilde{\varsigma} e^{-\tilde{\eta}
	(t-t_{0})}   \left\|\left[ \begin{array}{cc}
\tilde{\chi}(t_{0})   \\
\eta (t_{0})  
\end{array} \right] \right\| ^{2}.   
\label{VVV13}
\end{equation} 

Hence, we obtain that $ y_{i} $ converges to $y^{*}$ exponentially.  	
\end{proof}

\begin{theorem} \label{Theorem4}
The event-triggering instants determined by (\ref{EV1})-(\ref{EV4}) guarantee that no agents will exhibit Zeno behavior.
\end{theorem}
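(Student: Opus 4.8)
The plan is to establish a uniform positive lower bound on the inter-event times $t^{i}_{k+1}-t^{i}_{k}$ for every agent $i$, which immediately rules out accumulation of triggering instants in finite time. First I would split the analysis according to the two branches of the triggering rule (\ref{EV1}). On any interval during which $t^{i}_{k}\in\Xi_{a}(t_{0},t)$, the next instant is set explicitly to $t^{i}_{k+1}=t^{i}_{k}+\kappa^{i}_{k}$ with $\kappa^{i}_{k}>0$ a prescribed dwell time, so the inter-event time there is bounded below by $\inf_{i,k}\kappa^{i}_{k}>0$ and no Zeno phenomenon can occur. It therefore suffices to exhibit a positive lower bound on the successful intervals $t^{i}_{k}\in\Xi_{s}(t_{0},t)$, on which (\ref{EV1}) is governed by the triggering functions (\ref{triggeringfunction}).

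Second, I would track the growth of the measurement errors (\ref{MeasurementError}) after a reset. At a successful instant $t^{i}_{k}$ one has $\hat{y}_{i}=y_{i}(t^{i}_{k})$, $\hat{\varrho}_{i}=\varrho_{i}(t^{i}_{k})$, $\hat{z}_{i}=z_{i}(t^{i}_{k})$, so $\tilde{e}_{yi}(t^{i}_{k})=0$ and $\tilde{e}_{\varrho z i}(t^{i}_{k})=0$. Since the sampled quantities are held constant between triggers, $\dot{\tilde{e}}_{yi}=-\dot{y}_{i}$ and $\dot{\tilde{e}}_{\varrho z i}=-(\dot{\varrho}_{i}+\dot{z}_{i})$, and from (\ref{TCC2})--(\ref{TCC3}) together with $\dot{y}_{i}=C_{i}(A_{i}x_{i}+B_{i}u_{i})$ these right-hand sides are affine in the errors and in quantities already known to be bounded. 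Indeed, by the exponential convergence (\ref{VVV13}) obtained in Theorem \ref{Theorem3}, the stacked state $\tilde{\chi}$ and the auxiliary variables $\eta_{i}$ are uniformly bounded, so the local Lipschitz property of $\triangledown f_{i}$ in Assumption \ref{Assumption3} yields constants $\varpi_{1i},\varpi_{2i}>0$, independent of $k$, with $\tfrac{d}{dt}\|\tilde{e}_{yi}\|\leq\varpi_{1i}\|\tilde{e}_{yi}\|+\varpi_{2i}$ and an analogous bound for $\tilde{e}_{\varrho z i}$.

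Third, applying the Comparison lemma from the reset value zero gives $\|\tilde{e}_{yi}(t)\|\leq(\varpi_{2i}/\varpi_{1i})(e^{\varpi_{1i}(t-t^{i}_{k})}-1)$, a continuous function vanishing at $t=t^{i}_{k}$. On the other hand, a trigger on $\Xi_{s}(t_{0},t)$ forces at least one monitored quantity to reach its threshold, e.g. $\sigma_{gi}\tilde{e}_{yi}^{2}=\sigma_{gi}\theta_{gi}e_{yi}^{2}+\eta_{gi}(t)$ (or the counterpart for $h_{i}$). Here the role of Lemma \ref{lemma4} is decisive: because $\eta_{gi}(t)>0$, hence $\eta_{i}(t)>0$ in (\ref{EV6}), for all $t$, the threshold is strictly positive even when the consensus errors $e_{yi},e_{\varrho z i}$ vanish, so triggering requires $\tilde{e}_{yi}^{2}\geq\eta_{gi}(t)/\sigma_{gi}>0$. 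Combining the upper bound on the error growth with this strictly positive floor, the elapsed time needed to drive the error from zero to the threshold is bounded below by some $\tau^{*}>0$; hence every inter-event time on $\Xi_{s}(t_{0},t)$ satisfies $t^{i}_{k+1}-t^{i}_{k}\geq\tau^{*}$.

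Finally, I would conclude that the inter-event times are uniformly lower bounded by $\min\{\tau^{*},\inf_{i,k}\kappa^{i}_{k}\}>0$, so only finitely many triggers occur in any bounded interval and Zeno behavior is excluded for every agent. The main obstacle I anticipate is the growth-rate estimate of the second step: it must be made uniform over all inter-event intervals and over both error channels simultaneously, which relies on first importing the global boundedness and exponential decay of $\tilde{\chi}$ and $\eta_{i}$ from Theorem \ref{Theorem3} rather than attempting to prove Zeno-freeness in isolation. A related subtlety is that the dynamic threshold $\eta_{gi}(t)/\sigma_{gi}$ may itself decrease in time; the uniform positivity guaranteed by Lemma \ref{lemma4} is precisely what prevents the lower bound $\tau^{*}$ from collapsing to zero.
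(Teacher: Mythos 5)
Your overall strategy --- reset the measurement errors to zero at each successful trigger, bound their growth using the boundedness of $\tilde{\chi}$ imported from Theorem \ref{Theorem3}, and observe that a trigger cannot fire until the error climbs to the strictly positive dynamic threshold $\eta_{gi}(t)/\sigma_{gi}$ --- is the same mechanism the paper uses. However, the pivotal claim in your third step, that this yields a \emph{uniform} lower bound $\tau^{*}>0$ on all inter-event times over $\Xi_{s}(t_{0},\infty)$, is a genuine gap. Lemma \ref{lemma4} gives only pointwise positivity $\eta_{gi}(t)>0$; it does not give a positive infimum over all $t$. Indeed the estimate (\ref{EV8}) shows only $\eta_{gi}(t)\geq \eta_{gi}(t_{0})e^{-(k_{gi}+\delta_{gi}/\sigma_{gi})(t-t_{0})}$, so the threshold decays (at worst exponentially) to zero, and the relative term $\theta_{gi}e_{yi}^{2}$ also vanishes as consensus is reached. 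With your growth estimate $\tfrac{d}{dt}\|\tilde{e}_{yi}\|\leq \varpi_{1i}\|\tilde{e}_{yi}\|+\varpi_{2i}$ (constant offset $\varpi_{2i}$), the time needed to climb from $0$ to $\sqrt{\eta_{gi}(t)/\sigma_{gi}}$ tends to zero as $t\to\infty$, so no single $\tau^{*}$ exists and your final sentence, asserting that Lemma \ref{lemma4} ``prevents the lower bound $\tau^{*}$ from collapsing to zero,'' is not correct.

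The conclusion survives because Zeno behavior means accumulation of triggers at a \emph{finite} time, and this is exactly how the paper closes the argument: it supposes $t^{i}_{k}\to\mathcal{T}_{0}<\infty$, restricts attention to $[\mathcal{T}_{0}-\varepsilon_{0},\mathcal{T}_{0}+\varepsilon_{0}]$ where the threshold is bounded below by $\eta_{gi}(0)e^{-(k_{gi}+\delta_{gi}/\sigma_{gi})(\mathcal{T}_{0}+\varepsilon_{0})}/\sigma_{gi}>0$ (a constant depending on the finite $\mathcal{T}_{0}$), derives a positive inter-event lower bound $2\varepsilon_{0}$ there via the constant bound $D^{+}\|\tilde{e}_{yi}\|\leq C_{0}$ and a fixed-point choice of $\varepsilon_{0}$, and contradicts $t^{i}_{\mathcal{N}+1}-t^{i}_{\mathcal{N}}<2\varepsilon_{0}$. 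To repair your write-up, replace the uniform-$\tau^{*}$ claim with this localized contradiction argument (or, equivalently, show that the time-dependent lower bound $\tfrac{1}{C_{0}}\sqrt{\eta_{gi}(t)/\sigma_{gi}}$ is bounded away from zero on every compact time interval). Two smaller points: $\inf_{i,k}\kappa^{i}_{k}>0$ is a design stipulation the paper never states explicitly (it only bounds $\kappa^{i}_{k}$ from above by $\kappa_{*}$), so you should flag it as an assumption; and the trigger condition forces $\tilde{e}_{yi}^{2}\geq\theta_{gi}e_{yi}^{2}+\eta_{gi}/\sigma_{gi}$, so discarding the $\theta_{gi}e_{yi}^{2}$ term as you do is fine but should be said once.
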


\begin{proof}
suppose that there exists Zeno behavior. That is, there exists an agent so that at certain time $ \mathcal{T}_{0} $, $ \lim_{t \rightarrow \infty} t^{i}_{k} = \mathcal{T}_{0}$. From the property of limits, we have that for certain  $ \varepsilon_{0}>0 $, there exists a positive integer $ \mathcal{N}(\varepsilon_{0}) $ such that 
\vspace*{-3pt}
\begin{equation}
t^{i}_{k} \in  [\mathcal{T}_{0}-\varepsilon_{0}, \mathcal{T}_{0}+\varepsilon_{0}], \ \forall k \geq \mathcal{N}(\varepsilon_{0}), 
\label{Z1}
\end{equation}
which implies that $ t^{i}_{\mathcal{N}(\varepsilon_{0})+1}-t^{i}_{\mathcal{N}(\varepsilon_{0})} < 2\varepsilon_{0} $.  

\vspace*{2pt} 
Based on (\ref{Dynamics}), (\ref{Controller3}) and (\ref{EV10}), the upper right-hand Dini derivative of $ \| \tilde{e}_{yi} \|  $ and $ \|  \tilde{e}_{\varrho zi} \| $ can be derived as 
\vspace*{-3pt}
\begin{subequations} \label{Z2}
\begin{align}
D^{+} \| \tilde{e}_{yi} \|  & = \| - \dot{y}_{i} \|  \leq \| C_{i} \tilde{A}_{i} (x_{i}- X_{i} \varrho_{i} ) + \dot{\varrho}_{i} \|,   \label{ED1} \\
 D^{+} \| \tilde{e}_{\varrho zi} \|   & = \|- \dot{\varrho}_{i} -\dot{z}_{i} \|  \leq    \| \dot{\varrho}_{i} + \dot{z}_{i} \|, \label{ED11} \\
\dot{\varrho}_{i}& = -\triangledown f_{i}(y_{i}) -\beta \bar{e}_{\varrho z i} - \alpha \beta \bar{e}_{y i} , \label{ED2} \\
\dot{z}_{i} & = \alpha \beta \bar{e}_{y i}, \ t \in [t^{i}_{k}, t^{i}_{k+1}).   \label{ED3} 
\end{align}
\end{subequations}


Case a): for  $ t \in  [ t^{i}_{k}, t^{i}_{k+1})   \cap   t=t^{i}_{k} \in \Xi_{s}(t_{0},t) $, we get  
\vspace*{-3pt} 
\begin{equation}
 D^{+} \| \tilde{e}_{yi} \|   \leq  \| C_{i} \tilde{A}_{i} (x_{i} - X_{i} \varrho_{i} ) -\triangledown f_{i}(y_{i}) - \beta e_{\varrho z i}  - \alpha \beta e_{y i}  \|. \label{Z3} 
\end{equation} 

From Theorem 2, we have that all the states $ \tilde{x}-X\tilde{\varrho} $, $ \tilde{\varrho}  + \tilde{z} $, $ \tilde{y} $ are bounded. Thus, for certain scalar $C_{0}>0$, 
\vspace*{-3pt} 
\begin{equation}
D^{+} \| \tilde{e}_{yi} \|   \leq  C_{0} . \label{Z4} 
\end{equation}

\vspace*{-3pt}
Since an event is triggered if (\ref{EV1}) is satisfied and $ || \tilde{e}_{yi} (t^{i}_{k})||=0 $, we have $ || \tilde{e}_{yi}(t) ||^{2} \geq   \theta_{gi} e^{2}_{yi} +  \eta_{gi}(t)/\sigma_{gi}  \geq \eta_{gi}(t)/\sigma_{gi} $ . Thus, 
\vspace*{-3pt}
\begin{equation}
|| \tilde{e}_{yi}(t) || \geq \sqrt{\eta_{gi}(t)/\sigma_{gi}} \geq \sqrt{\eta_{gi}(0)/\sigma_{gi}} e^{-\frac{k_{gi}+\delta_{gi}/\sigma_{gi}}{2} t}.
\label{Z5} 
\end{equation}

Hence, it follows from (\ref{Z4}) and (\ref{Z5}) that 
\vspace*{-3pt}
\begin{equation}
t^{i}_{\mathcal{N}(\varepsilon_{0})+1}-t^{i}_{\mathcal{N}(\varepsilon_{0})} \geq \frac{1}{C_{0}} \sqrt{\eta_{gi}(0)/\sigma_{gi}} e^{-\frac{k_{gi}+\delta_{gi}/\sigma_{gi}}{2} t^{i}_{\mathcal{N}(\varepsilon_{0})+1}}.   \notag 
\end{equation}

\vspace*{-3pt}
Select $ \varepsilon_{0} $ as the solution of $ \frac{1}{C_{0}} \sqrt{\eta_{gi}(0)/\sigma_{gi}} e^{-\frac{k_{gi}+\delta_{gi}/\sigma_{gi}}{2} \mathcal{T}_{0}} =2 \varepsilon_{0} e^{\frac{k_{gi}+\delta_{gi}/\sigma_{gi}}{2} \varepsilon_{0}} $. Then, we obtain that 
\vspace*{-3pt}
\begin{equation}
t^{i}_{\mathcal{N}(\varepsilon_{0})+1}-t^{i}_{\mathcal{N}(\varepsilon_{0})} \geq \frac{1}{C_{0}} \sqrt{\eta_{gi}(0)/\sigma_{gi}} e^{-\frac{k_{gi}+\delta_{gi}/\sigma_{gi}}{2} ( \mathcal{T}_{0} + \varepsilon_{0})}=2\varepsilon_{0} , \notag  
\end{equation}
which contracts (\ref{Z1}). Thus, no Zero behavior exists for Case a). 

\vspace*{4pt}
Case b): for $ t \in   [ t^{i}_{k}, t^{i}_{k+1})    \cap  t=t^{i}_{k} \in \Xi_{s}(t_{0},t) $,  $ D^{+} \| \tilde{e}_{\varrho zi} \|   \leq  \|  -\triangledown f_{i}(y_{i}) - \beta e_{\varrho z i}  \| $. Following a similar procedure, we can also show that no Zero behavior exists for $ \tilde{e}_{\varrho zi} $.

Overall, Zero behavior can be avoided for agent $ i $.
\end{proof}

\vspace*{3pt} 
\begin{remark}
Unlike traditional time-triggered control schemes in (\ref{Controller2}), an event-based transmission strategy is presented in (\ref{Controller3}) to save limited communication resources and to be resilient against attacks. The defined $ \{t^{i}_{k} \} $ is an aperiodic transmission sequence determined by (\ref{EV1}) for agent i. This means that each agent has its own triggering rule, that is, they transmit data in an asynchronous manner. In addition, the triggering process of each agent does not affect each other, that is, an agent and its neighbors are considered independent from the perspective of activating the triggering rules. Each agent needs to measure its own  information and has access to its neighboring information, which essentially reflects the basic requirement of distributed control strategy. 
\end{remark}

\begin{remark}
Inspired by \cite{Girard15TAC} and \cite{Yi17CDC }, a new dynamical event-triggered communication scheme in (\ref{EV1})-(\ref{EV4}) is designed over an insecure and unreliable network. For linear multi-agent systems, the proposed scheme in (\ref{EV1})-(\ref{EV4}) generates transmission updates under DoS attacks. General speaking, there exist several methods to exclude Zeno behavior, e.g., a pre-defined dwell time design in \cite{Hu19TCST} or combination with sample-data schemes in \cite{Dixion17TAC}. In contrast to the above literature, the dynamical event-triggered communication scheme in (\ref{EV1})-(\ref{EV4}) is developed to avoid the continuous communication and involvement of global graph information. In addition, its effectiveness has been verified even in the presence of DoS attacks over random digraphs.  
\end{remark}

\vspace{-3pt} 	
\section{Numerical Simulations}
\vspace{-1pt}
In this section, numerical simulation examples are presented to verify the effectiveness of the proposed designs. 

\vspace{2pt}
Consider a group of agents described by the following general heterogeneous linear dynamics with different dimensions:   
\vspace{-1pt}
\begin{align}  
& \dot{x}_{i}(t)=A_{i}x_{i}(t)+B_{i}u_{i}(t),  \  y_{i}(t)=C_{i}x_{i}(t),  \ \text{where},  \label{simu}  \\
A_{i}&= \left[ \begin{array}{cc}
0 & 1  \\
0 & 0     
\end{array} \right],  B_{i}= \left[ \begin{array}{cc}
0 & 1   \\
1 & -2   
\end{array} \right],  C^{T}_{i}= \left[ \begin{array}{c}
1 \\  
1  
\end{array} \right], \ i=1, \notag \\ 
A_{i}&= \left[ \begin{array}{cc}
0 & -1  \\
1 & -2     
\end{array} \right],  B_{i}= \left[ \begin{array}{cc}
1 & 0   \\
3 & -1   
\end{array} \right],  C^{T}_{i}= \left[ \begin{array}{c}
-1 \\  
1  
\end{array} \right], \ i=2, \notag \\ 
A_{i}&= \left[ \begin{array}{ccc}
0 & 1 & 0  \\
0 & 0 & 1 \\
0.5 & 1 & -2  
\end{array} \right],  B_{i}= \left[ \begin{array}{cc}
1 & 0   \\
0 & 1  \\
1 & 0  
\end{array} \right],  C_{i}^{T}= \left[ \begin{array}{c}
1 \\ 
-1  \\
1  
\end{array} \right], i=3.\notag
\end{align} 

In order to cooperatively solve the original optimization problem: $ \mathrm{F}(\theta)=\sum_{i=1}^{N}f_{i}(\theta), \ i=1,2,3 $, the local objective functions for each agent are described by  $ f_{1}(\theta)=-2e^{-0.5\theta}+0.5e^{0.3\theta}$, $ f_{2}(\theta)=\theta^{4}+2\theta^{2}+2 $, and $ f_{3}(\theta)= 0.5\theta^{2}\ln(1+\theta^{2})+\theta^{2} $ \cite{Kia15AT}.

\vspace{2pt}
The random digraphs for a team of three agents are shown in Fig. \ref{Agent_Random_graphs}, where the random process is captured by the Markov chain with the state space being $ \mathcal{S}=\{1,2,3\} $, and its transition rate matrix is $ \Upsilon =\left( 
\begin{array}{ccc}
-0.1 & 0.02 & 0.08 \\ 
0.3 & -0.5 & 0.2 \\ 
0.1 & 0.1 & -0.2%
\end{array}%
\right) $, whose row summation is zero and all off-diagonal elements are nonnegative. The initial distribution is given by $[0.5882,0.1500,0.3235].$ As can be seen, each graph is disconnected, while the union of graphs contains a directed spanning tree to satisfy Assumption \ref{Assumption4}.

Next, executions of resilient distributed optimization algorithms with respect to three different cases are  presented. 
The values of the initial states  $ x_{i}(0) $, $ \varrho_{i}(0) $ and $ z_{i}(0) $ are randomly chosen in an interval $ [-10, 10] $. The controller gain matrix $ K_{i} $ is chosen so that $ A_{i}-B_{i}K_{i} $ is Hurwitz, which are provided as: $ K_{1}=[3, 5; 1.5, 1] $, $ K_{2}=[0.75, -1; 1.25, -4] $, $ K_{3}=[2.167, 1, 0.333; 0, 3, 1] $. Thus, the gain matrices $ U_{i} $,  $ W_{i} $, $ X_{i} $ (solution to (\ref{RegulationEquation})) can be determined by: $ U_{1} =[1;0.5] $, $ U_{2}=[-0.5;0] $, $ U_{3}=[-1;0] $, $ W_{1}=[1.5;0.5] $, $ W_{2}=[-0.5;-2] $, $ W_{3}=[0;-1] $, $ X_{1}=[0.5;0.5] $, $ X_{2}=[-0.5 ; \\ 0.5] $, and $ X_{3}=[0;-1;0] $, respectively.

\begin{figure}[!t]
	\centering
	\includegraphics[width=7.5cm,height=1.8cm]{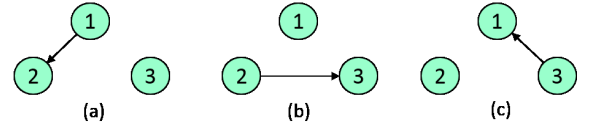}
	\caption{The communication digraphs for a team of three agents.}
	\label{Agent_Random_graphs}
\end{figure}

\begin{figure}[!t]
	\centering
	\includegraphics[width=9.0cm,height=4.7cm]{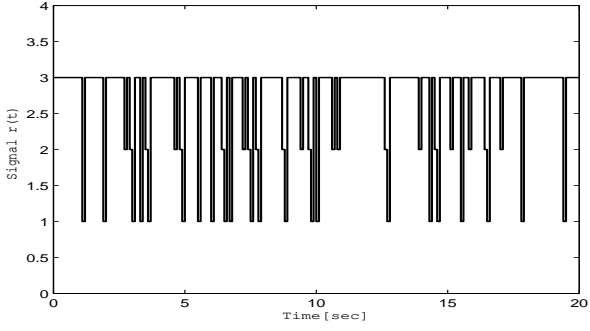}
	\caption{The signal $ r(t) $ with respect to random digraphs in Fig. \ref{Agent_Random_graphs}.}
	\label{MarkovSignal_RandomGraphs}
\end{figure}

\begin{figure}[!t]
	\centering
	\includegraphics[width=9.0cm,height=7.0cm]{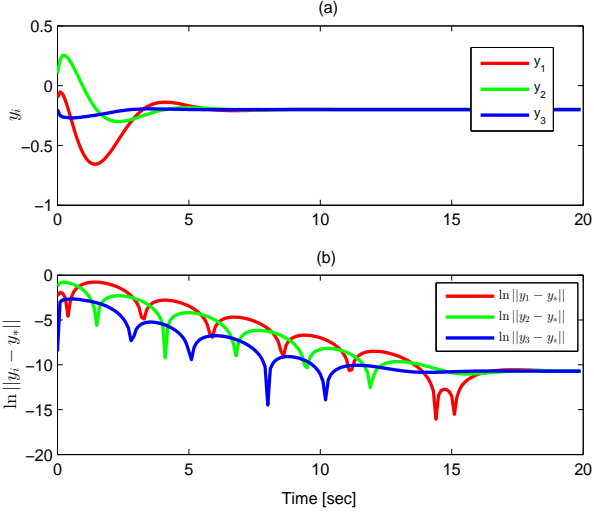}
	\caption{Execution of algorithm (\ref{Controller1}) over random digraphs under $ \alpha=2 $, $ \beta=1 $: (a) the states $y_{i}$; (b) optimal errors $ \ln ||y_{i}-y_{*}|| $, $i=1,2,3$.}
	\label{RandomGraphsOptimalStatesBeta1}
\end{figure}

\begin{figure}[!t]
	\centering
	\includegraphics[width=9.0cm,height=4.5cm]{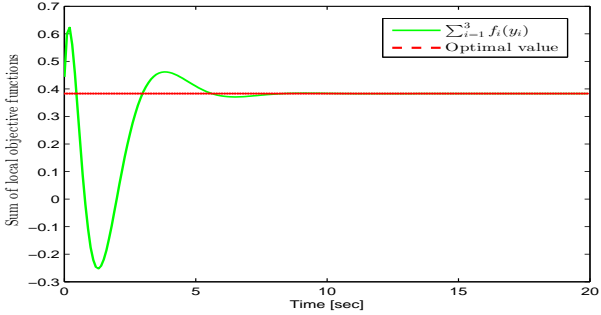}
	\caption{The sum of local objective functions $ \sum^{3}_{i=1} f_{i}(y_{i}) $.}
	\label{RandomGraphsOptimalValuesBeta1}
\end{figure}

\textit{Case 1: execution of reliable distributed optimization algorithm (\ref{Controller1}) over random digraphs and without DoS attacks}

In this simulation, the resilient distributed optimization algorithm (\ref{Controller1}) is performed under only unreliable random digraphs in the absence of DoS attacks.   
Figs. \ref{MarkovSignal_RandomGraphs}-\ref{RandomGraphsOptimalValuesBeta1} depict the simulation results for the execution of (\ref{Controller1}). The signal $ r(t) $ in Fig. \ref{MarkovSignal_RandomGraphs} describes the random process of unreliable digraphs in Fig. \ref{Agent_Random_graphs}. Fig. \ref{RandomGraphsOptimalStatesBeta1} shows the trajectories of both output states $y_{i}$ and optimal errors $ \ln ||y_{i}-y_{*}|| $, respectively, while Fig. \ref{RandomGraphsOptimalValuesBeta1} illustrates the evolution of the sum of local objective functions $ \sum^{3}_{i=1} f_{i}(y_{i}) $. It can be seen that these outputs reach consensus and converge to the optimal solution $ y_{*} $. 

To illustrate the effect of the parameter $ \beta $, Fig. \ref{RandomGraphsOptimalStates} illustrate the simulation results with different $ \beta $. As we can see, the larger value of $ \beta $ results in a faster convergence as discussed.

\begin{figure}[t!] 
	\centering
	\hspace*{-0.9em}
	\begin{tabular}{cc}	
		
		{\includegraphics[width=4.8cm,height=4.8cm]{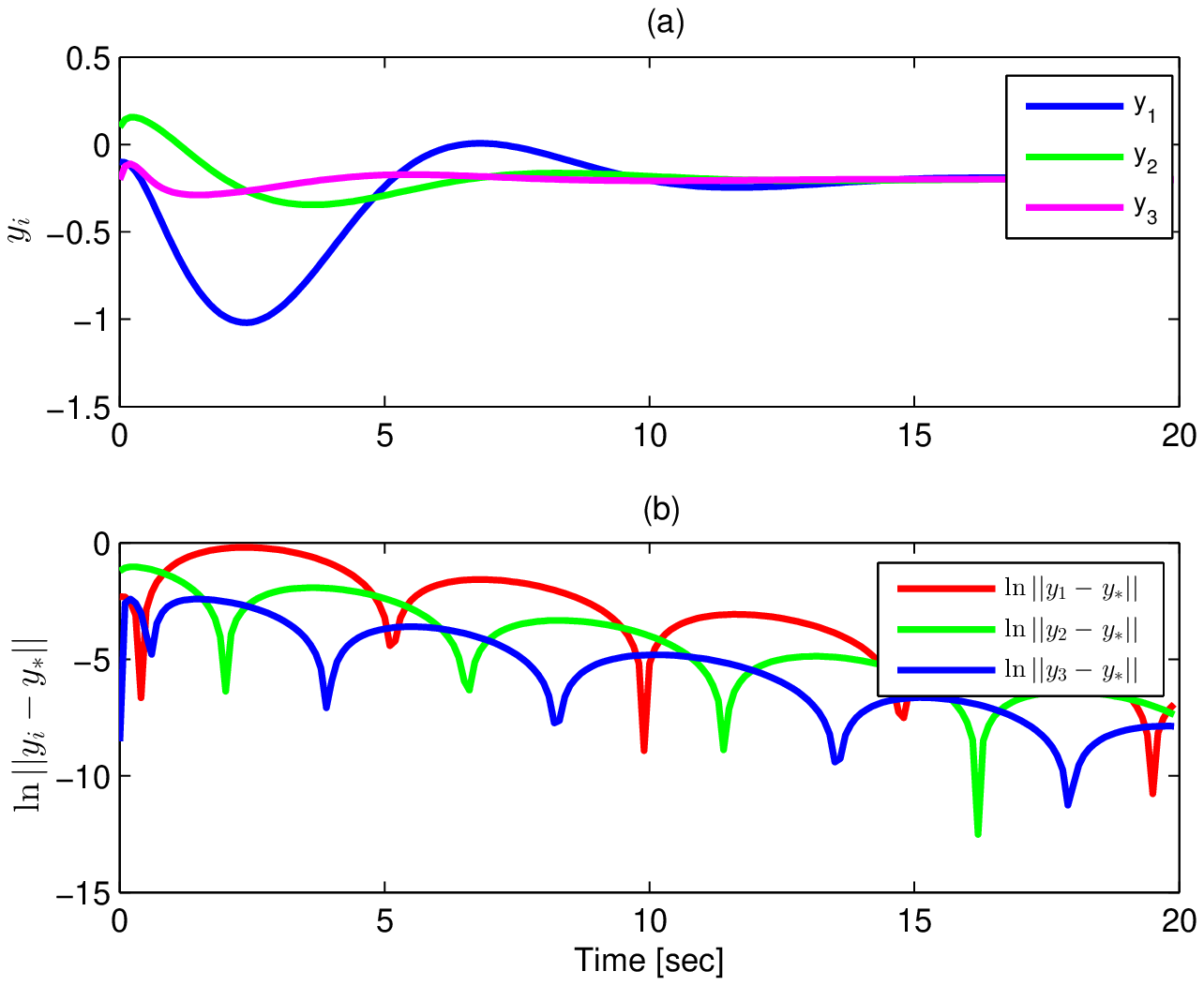}
			\label{L1}}
		
		\hspace*{-1.5em}		
		{\includegraphics[width=4.8cm,height=4.8cm]{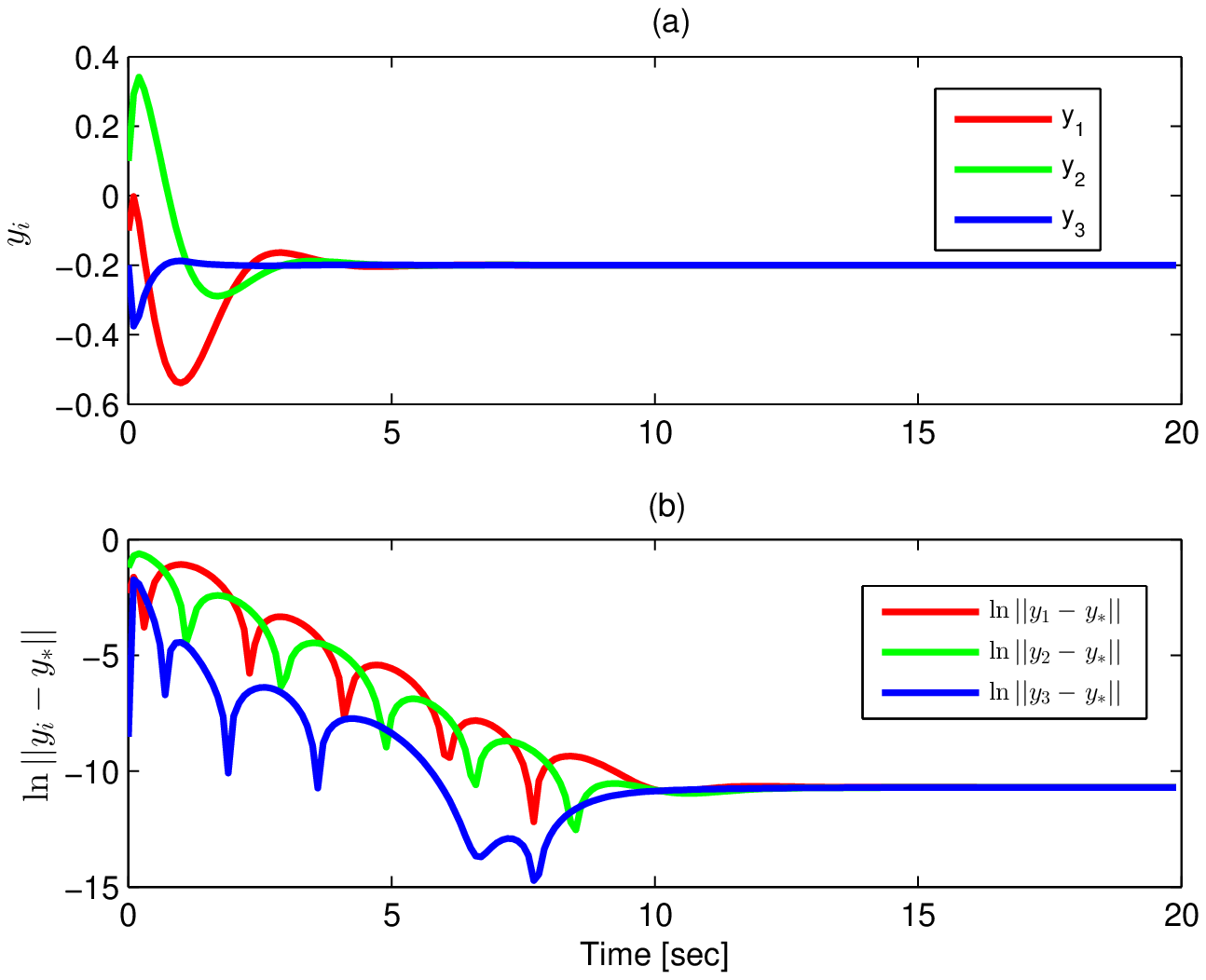}
			\label{L2}}
	\end{tabular}
	\vspace*{-3pt}
\caption{Execution of the proposed algorithm (\ref{Controller1}) over random digraphs under (a) $ \alpha=2 $, $ \beta=0.5 $; and (b) $ \alpha=2 $, $ \beta=1.5 $.}
\label{RandomGraphsOptimalStates}
\end{figure}

\textit{Case 2: execution of time-based resilient distributed optimization algorithm (\ref{Controller2}) under DoS attacks over random digraphs}

In this simulation, the resilient distributed optimization algorithm (\ref{Controller2}) with time-based communication is performed under DoS attacks over random digraphs. The simulation result is shown in Fig. \ref{ResilientOptimizationPeriodicCommunicationBoth}.  The DoS attack signal is simulated in Fig. \ref{ResilientOptimizationPeriodicCommunicationBoth}(a)  with $\tau_{a}=3s$. Based on (\ref{Condition1}) and (\ref{Condition2}), the two conditions are satisfied with $F_{a}(t_{0},t)=\frac{N_{a}(t_{0},t)}{t-t_{0}}\leq 0.01$ and $T_{a}>(\lambda_{a}+\lambda_{b})/(\lambda_{a}-\eta^{\ast })=2$. That is, the attack cannot occur more than $0.01$ times during a unit of time. Figs. \ref{ResilientOptimizationPeriodicCommunicationBoth}(b) and \ref{ResilientOptimizationPeriodicCommunicationBoth}(c) show the outputs and the errors, while Fig. \ref{ResilientOptimizationPeriodicCommunicationBoth}(d) shows the sum of local cost functions. As can be seen,  these outputs reach consensus and converge to the solution $ y_{*} $ under DoS attacks over random digraphs.

\begin{figure}[!t]
	\centering
	\hspace*{-0.9em}
	\includegraphics[width=10.2cm,height=8.2cm]{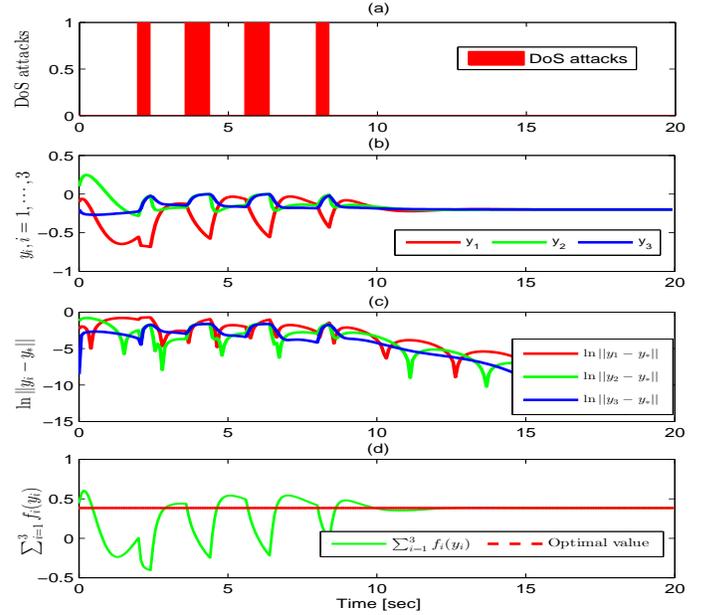}
	\caption{Execution of the proposed algorithm (\ref{Controller2}) under DoS attacks over random digraphs for $ \alpha=2 $, $ \beta=1$: (a) Dos attack signals; (b) states $y_{i}$; (c) optimal errors $ \ln ||y_{i}-y_{*}|| $, (d) sum of local objective functions $ \sum^{3}_{i=1} f_{i}(y_{i}) $, $i=1,2,3$.}
	\label{ResilientOptimizationPeriodicCommunicationBoth}
\end{figure}

\textit{Case 3: execution of event-based resilient distributed optimization algorithm (\ref{Controller3}) under DoS attacks over random digraphs}

In this part, the event-triggered resilient distributed optimization algorithm (\ref{Controller3}) has been performed under attacks over random digraphs. The simulation environments including DoS attacks and control parameters are set the same as those in Case 2. Then, the simulation results are depicted in Fig. \ref{ResilientOptimizationEventCommunication}, from which it can be seen that these outputs reach consensus and can converge to $ y_{*} $. Thus, the optimization problem  is solvable by the proposed distributed algorithm (\ref{Controller3}) with an event-based communication strategy under DoS attacks over random digraphs. Fig. \ref{Events} depicts the event time instants of agents, and there exists no Zeno-behavior.  

\begin{figure}[!t]
	\centering
	\hspace*{-0.9em}
	\includegraphics[width=10.0cm,height=8.2cm]{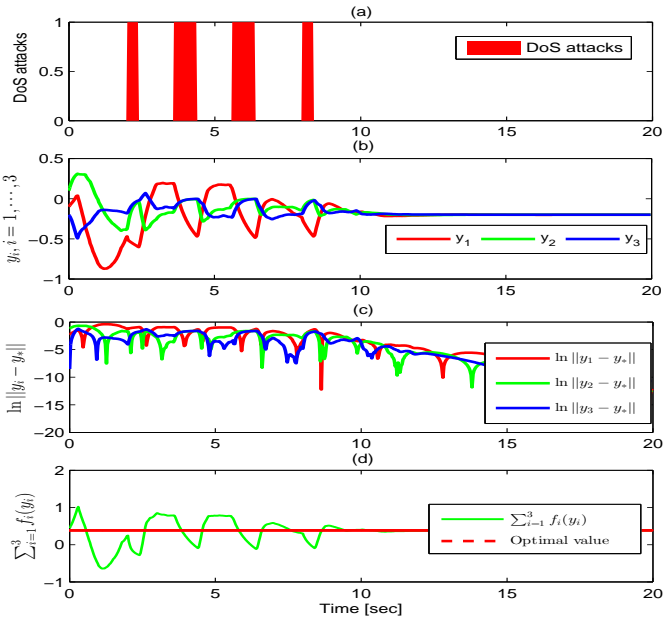}
	\vspace*{-18pt}
	\caption{Execution of the proposed algorithm (\ref{Controller3}) under DoS attacks over random digraphs for $ \alpha=2 $, $ \beta=1$: (a) Dos attack signals; (b) states $y_{i}$; (c) optimal errors $ \ln ||y_{i}-y_{*}|| $, (d) sum of local objective functions $ \sum^{3}_{i=1} f_{i}(y_{i}) $, $i=1,2,3$.}
	\label{ResilientOptimizationEventCommunication}
\end{figure}

\begin{figure}[!t]
	\centering
	\hspace*{-0.9em}
	\includegraphics[width=10.0cm,height=4.2cm]{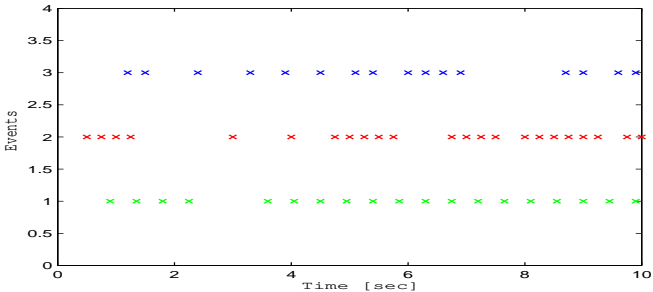}
	\vspace*{-20pt}
	\caption{The event time instants of all agents.}
	\label{Events}
\end{figure}

\vspace*{-3pt}
\section{Conclusion} 
\vspace*{-2pt}
In this paper, resilient exponential distributed convex optimization problems were studied for linear multi-agent systems under DoS attacks over random digraphs. The two types of time-based and event-based resilient distributed optimization algorithms were proposed to solve these problems, respectively. Under both algorithms, the global minimizer was achieved exponentially, provided that an explicit analysis of the frequency and duration of attacks was established. In addition, it was proved that there were no Zeno behavior occurring under the dynamic event-triggered condition. Future work will investigate resilient constrained optimization.  
  

\end{document}